\newtheorem{theorem}{Theorem}[section]
\newtheorem{lemma}[theorem]{Lemma}
\newtheorem{proposition}[theorem]{Proposition}
\newtheorem{conjecture}[theorem]{Conjecture}
\newtheorem{corollary}[theorem]{Corollary}
\newtheorem*{theorem-anon}{Theorem}
\theoremstyle{definition}
\newtheorem*{definition}{Definition}
\theoremstyle{remark}
\newtheorem*{remark}{Remark}
\newcommand{\de}{\partial}
\newcommand{\db}{\overline{\partial}}
\newcommand{\Ric}{\text{Ric}}
\newcommand{\ev}{\text{ev}}
\newcommand{\Bs}{\text{Bs}}
\newcommand{\sections}{H^0(M, K_M^{-m})}
\title{K\"ahler-Einstein metrics and higher alpha-invariants}
\author{Heather Macbeth}
\address{Department of Mathematics, Princeton University; Fine Hall, Washington Rd, Princeton, NJ 08544}
\email{macbeth@math.princeton.edu}
\begin{document}

\maketitle

\begin{abstract}
We give a criterion for the existence of a K\"ahler-Einstein metric on a Fano manifold $M$ in terms of the higher algebraic alpha-invariants $\alpha_{m,k}(M)$.
\end{abstract}

\section{Introduction}

\subsection{Overview}

It has long been understood that on Fano manifolds (that is, compact complex manifolds whose anticanonical bundle is ample), it should be possible to give precise (both necessary and sufficient) and purely algebro-geometric criteria for the existence of a K\"ahler-Einstein metric.  A general such criterion, \emph{K-stability}, was developed conjecturally over many decades, and very recently proved \cite{cds15i,cds15ii,cds15iii,tian12}.

K-stability, however, is in practice very difficult to verify.  For example, it is conjectured but not proven for various moduli spaces of semistable bundles on Riemann surfaces \cite{hwa00, iye11}, and for certain deformations of the Mukai-Umemura manifold \cite[p45]{don08}.  For this reason, it is also natural %\cite{nad90,dk01}
to work on developing simpler and more explicit (though less general) algebro-geometric criteria for the existence of K\"ahler-Einstein metrics on a Fano manifold $M$.

In this paper we develop one such criterion, involving the higher \emph{alpha-invariants} (or \emph{global log-canonical thresholds}) $\alpha_{m,k}(M)$. The model is a theorem of Tian for $k=2$ \cite{tia91},  used by him in proving the existence of K\"ahler-Einstein metrics on the last few dimension-2 manifolds for which that question had been open \cite{tia90}:
\begin{theorem}[\cite{tia90,tia91}, combined with the partial $\mathcal{C}^0$-estimate of \cite{sze13}] \label{tian}
Let $M$ be a Fano manifold of dimension $n\geq 2$.
There exists a natural number $m$, and a (explicitly computable) real number $\epsilon=\epsilon(n,\alpha_{m,2}(M))$, such that if
\begin{eqnarray*}
 \alpha_{m,2}(M)&>&\frac{n}{n+1},\\
 \alpha_{m,1}(M)&>&\frac{n}{n+1}-\epsilon,
\end{eqnarray*}
then $M$ admits a K\"ahler-Einstein metric.
\end{theorem}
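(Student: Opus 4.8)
The plan is to follow Tian's proof via Aubin's continuity method, the one ingredient not available to Tian in $1990$--$91$ being the partial $C^0$-estimate, now a theorem of Sz\'ekelyhidi. Normalize $[\omega_0]=c_1(M)$ and consider
\[
 \Ric(\omega_t)=t\,\omega_t+(1-t)\,\omega_0,\qquad t\in[0,1],
\]
whose solution at $t=1$ is a K\"ahler--Einstein metric. Writing $\omega_t=\omega_0+i\de\db\varphi_t$, the set $S\subseteq[0,1]$ of parameters admitting a solution contains $0$ by Yau's theorem and is open because $\Ric(\omega_t)>t\,\omega_t$ for $t<1$ makes the linearized operator $\Delta_{\omega_t}+t$ invertible. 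Hence it suffices to show $S$ is closed, which by the a priori estimates of Yau and Aubin reduces to a uniform bound $\mathrm{osc}_M\,\varphi_t\le C$ on the potentials. For this I would use Tian's reduction: after normalizing $\psi_t=\varphi_t-\sup_M\varphi_t\le 0$, the oscillation of $\varphi_t$ is controlled as soon as
\[
 \int_M e^{-\gamma\psi_t}\,\omega_0^n\le C\quad\text{uniformly in }t
\]
for some fixed $\gamma>\tfrac n{n+1}$.

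To produce such a $\gamma$ I would invoke the partial $C^0$-estimate along the continuity method: it yields a natural number $m$ (which may be taken to depend only on $n$) such that the $m$-th Bergman functions $\rho_m(\omega_t)$ are uniformly bounded below by some $b>0$. Fix a smooth Hermitian metric $h_0$ on $K_M^{-1}$ with curvature $\omega_0$, and for each $t$ an $L^2(\omega_t)$-orthonormal basis $\{s_j^{(t)}\}$ of $\sections$. The identity $\sum_j|s_j^{(t)}|^2_{h_0^m}=e^{m\varphi_t}\rho_m(\omega_t)$ (with $h_t=h_0e^{-\varphi_t}$ the metric of curvature $\omega_t$) together with $b\le\rho_m(\omega_t)\le B$ gives
\[
 m\,\psi_t=\log\frac{\sum_j|s_j^{(t)}|^2_{h_0^m}}{\sup_M\sum_j|s_j^{(t)}|^2_{h_0^m}}+O(1),
\]
the $O(1)$ uniform; i.e.\ $\psi_t$ is, up to bounded error, $\tfrac1m$ times a Fubini--Study potential pulled back by the Kodaira embedding of $M$ by $|K_M^{-m}|$. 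Consequently $\int_M e^{-\gamma\psi_t}\,\omega_0^n$ is comparable, uniformly in $t$, to a normalized integral of $\bigl(\sum_j|s_j^{(t)}|^2_{h_0^m}\bigr)^{-\gamma/m}$, a quantity of log-canonical-threshold type attached to divisors in $|K_M^{-m}|$.

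The remaining, and most delicate, step is Tian's pencil estimate of this integral. Covering $M$ by coordinate patches and rotating the orthonormal basis, one reduces near any point where $\sum_j|s_j^{(t)}|^2_{h_0^m}$ is small to estimating $\int_M\bigl(|s_0^{(t)}|^2+|s_1^{(t)}|^2\bigr)^{-\gamma/m}_{h_0^m}\,\omega_0^n$ for a suitable pair of basis sections; averaging over the pencil $\mathbb{P}^1=\mathbb{P}(\langle s_0^{(t)},s_1^{(t)}\rangle)$ this equals a universal constant times
\[
 \int_{\mathbb{P}^1}\Bigl(\int_M|t_0 s_0^{(t)}+t_1 s_1^{(t)}|^{-2\gamma/m}_{h_0^m}\,\omega_0^n\Bigr)\,\omega_{\mathrm{FS}}.
\]
For a generic member $t_0 s_0^{(t)}+t_1 s_1^{(t)}$ the inner integral is finite exactly when $\gamma$ lies below a log-canonical threshold of its divisor, a quantity bounded below by $\alpha_{m,2}(M)$; since $\tfrac n{n+1}<\gamma<\alpha_{m,2}(M)$ is available, the inner integral is finite and uniformly bounded away from the finitely many bad parameters. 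At those parameters it may diverge, but only at a rate governed by the log-canonical thresholds of the bad members, hence by $\alpha_{m,1}(M)$, and the two extra real dimensions of the $\mathbb{P}^1$-integration absorb this divergence provided $\alpha_{m,1}(M)$ is not too far below $\gamma$. Quantifying ``not too far'' extracts the explicit $\epsilon=\epsilon(n,\alpha_{m,2}(M))$, whose size shrinks as $\alpha_{m,2}(M)\downarrow\tfrac n{n+1}$ because that squeezes the admissible range for $\gamma$; with $\alpha_{m,1}(M)>\tfrac n{n+1}-\epsilon$ in force, the double integral is uniformly finite. Summing over the cover gives the integral bound, hence the $C^0$-estimate, hence closedness of $S$ and $1\in S$.

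I expect the main obstacle to be precisely this last step: making rigorous the rotation of the orthonormal basis and the splitting of $M$ into the ``pencil-dominated'' and ``residual'' parts, controlling how the $O(1)$ errors from the partial $C^0$-estimate propagate through the choice of $\gamma$, and carrying out the asymptotic analysis of the inner integral near the bad parameters sharply enough to pin down $\epsilon=\epsilon(n,\alpha_{m,2}(M))$. A subsidiary point needing care is that the partial $C^0$-estimate is used uniformly along the whole path $t\in[0,1)$ and not merely for a limiting metric; this is exactly the content of Sz\'ekelyhidi's theorem, the Ricci lower bound $\Ric(\omega_t)\ge t\,\omega_t$ supplying the diameter and non-collapsing bounds it requires.
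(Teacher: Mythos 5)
Your high-level framework — the Aubin continuity method, Sz\'ekelyhidi's partial $\mathcal{C}^0$-estimate, the identity $\sum_j|s_j^{(t)}|^2_{h_0^m}=e^{m\varphi_t}\rho_m(\omega_t)$ turning that estimate into an algebraic approximation of $\psi_t$, and the resulting reduction to a uniform bound on $\int_M e^{-\gamma\psi_t}\omega_0^n$ — agrees with the paper's. The gap is in what comes next. First, the Crofton/pencil step reverses the roles of the two alpha-invariants: for a $2$-dimensional subspace $V=\langle s_0,s_1\rangle$, what $\alpha_{m,2}$ bounds from below is the log-canonical threshold of the \emph{pencil} $|s_0|^2+|s_1|^2$ itself; the threshold of a \emph{generic member} $t_0s_0+t_1s_1$ can be strictly smaller and is only controlled by $\alpha_{m,1}$, which by hypothesis may be below $\tfrac{n}{n+1}$. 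So the inner integral $\int_M|t_0s_0+t_1s_1|^{-2\gamma/m}_{h_0^m}\omega_0^n$ is not in general finite for $\gamma<\alpha_{m,2}$, and the Crofton averaging does not repair this, since $\alpha_{m,2}$ already directly controls the left-hand side $\int_M(|s_0|^2+|s_1|^2)^{-\gamma/m}\omega_0^n$ with nothing gained.

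Second, and more substantively, your proposal never confronts the degeneration of the Bergman data along the continuity path, which is the actual obstruction. After passing to a fixed $h_0^m$-orthonormal basis $(\tilde s_j)$, the algebraic approximation of $m\psi_t$ is $\log\sum_j\lambda_j|\tilde s_j|^2_{h_0^m}$ with $1=\lambda_1\ge\lambda_2\ge\cdots>0$. One has $\int_M\bigl(\sum_j\lambda_j|\tilde s_j|^2\bigr)^{-\gamma/m}\omega_0^n\le\lambda_2^{-\gamma/m}\int_M(|\tilde s_1|^2+|\tilde s_2|^2)^{-\gamma/m}\omega_0^n$, so $\alpha_{m,2}>\gamma$ controls the integral only up to the unbounded factor $\lambda_2^{-\gamma/m}$. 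The ingredient you are missing — and the one the paper isolates as ``control of the second eigenvalue,'' supplied for $k=2$ by Tian's work — is the bound $-\tfrac1m\log\lambda_2\le (1+\epsilon)I_2(\psi_t)+C$, together with the opposite inequality $\fint_M(-\varphi_t)\omega_{\varphi_t}^n+(n-1)I_2(\varphi_t)\le n\sup_M\varphi_t$ coming from Aubin's $I$ and $J$ functionals along the continuity path. Taking a suitable positive linear combination of these, of the $k=1$ Jensen/alpha-invariant estimate, and of the $k=2$ estimate eliminates both $\fint_M(-\varphi_t)\omega_{\varphi_t}^n$ and $I_2(\varphi_t)$ and yields the uniform $\sup_M\varphi_t\le C$; it is precisely this linear-programming step that produces the explicit $\epsilon=\epsilon(n,\alpha_{m,2}(M))$. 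Your sketch contains no substitute mechanism for absorbing $\lambda_2^{-\gamma/m}$ and does not mention $I$, $J$, or $I_k$, so as written it does not close.
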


\begin{remark}
For clarity we distinguish this from a perhaps better-known theorem previously proved by Tian \cite{tia87}:  Let $M$ be a Fano manifold of dimension $n\geq 2$; if $\alpha(M)>\frac{n}{n+1}$, then $M$ admits a K\"ahler-Einstein metric.
\end{remark}

We give a partial generalization of Tian's work from $k=2$ to arbitrary $k$.
Postponing until Subsection \ref{eigenvalue-conjecture} the definition of our key hypothesis (that for suitable $k$ and $m$, \emph{the $k$-th eigenvalue of $K_M^{-m}$ be controlled}; this is a statement about Bergman metrics),  the criterion is:

\begin{theorem} \label{main-thm}
Let $M$ be a Fano manifold of dimension $n$.
Let $k$ be a natural number, with $2\leq k\leq n$.
Suppose that for each $m$ sufficiently large, the $k$-th eigenvalue of $K_M^{-m}$ is controlled.

Then there exists a natural number $m$, and a (explicitly computable) real number $\epsilon=\epsilon(n,k,\alpha_{m,k}(M))$, such that if
\begin{eqnarray*}
 \alpha_{m,k}(M)&>&\frac{n}{n+1},\\
 \alpha_{m,1}(M)&>&\frac{n}{n+1}-\epsilon,
\end{eqnarray*}
then $M$ admits a K\"ahler-Einstein metric.
\end{theorem}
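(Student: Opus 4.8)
The plan is to run the Aubin continuity method along the path $\Ric(\omega_t)=t\omega_t+(1-t)\omega_0$, $t\in[0,1)$, and to prove that the set of $t$ for which it is solvable is closed; since it is open, $t=1$ is then reached and one obtains a K\"ahler-Einstein metric. By the standard higher-order estimates, together with Tian's Moser-Trudinger-type argument, closedness will follow once one produces a single constant $\gamma>\tfrac n{n+1}$ and a uniform bound
\[
\int_M e^{-\gamma\,(\varphi_t-\sup_M\varphi_t)}\,\omega_0^n\ \le\ C,
\]
where $\omega_t=\omega_0+dd^c\varphi_t$. This is exactly the place where Tian's 1987 theorem invokes $\alpha(M)>\tfrac n{n+1}$; the task here is to run this final estimate using the finite-level invariants $\alpha_{m,k}(M)$ and $\alpha_{m,1}(M)$ instead.

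First I would fix $m$ large enough that the partial $\mathcal C^0$-estimate of \cite{sze13} holds along the path: the Bergman function $\rho_m(\omega_t)=\sum_j|\sigma_j^t|^2_{h_{\omega_t}^m}$ (with $\{\sigma_j^t\}$ an $L^2(\omega_t)$-orthonormal basis of $H^0(M,K_M^{-m})$ and $h_{\omega_t}$ the metric of curvature $\omega_t$) stays between two positive constants, uniformly in $t$. Hence $\varphi_t=\tfrac1m\log\sum_j|\sigma_j^t|^2_{h_0^m}+O(1)$, uniformly. Writing $\sigma_j^t=g_t\cdot\tau_j$ for a fixed $L^2(\omega_0)$-orthonormal basis $\{\tau_j\}$ and $g_t\in GL\big(H^0(M,K_M^{-m})\big)$, and using the singular-value (Cartan $KAK$) decomposition $g_tg_t^*=U_t\,\mathrm{diag}(e^{2\lambda_1^t},\dots,e^{2\lambda_N^t})\,U_t^*$ with $\lambda_1^t\ge\cdots\ge\lambda_N^t$, one finds
\[
\sum_j|\sigma_j^t|^2_{h_0^m}\ =\ \sum_{l=1}^N e^{2\lambda_l^t}\,|s_l^t|^2_{h_0^m}
\]
for another ($t$-dependent) $L^2(\omega_0)$-orthonormal basis $\{s_l^t\}$. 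Absorbing $\tfrac m2\sup_M\varphi_t$ into a redefinition of the $\lambda_l^t$ (harmless, since only their differences will matter), a short computation using $\rho_m\le C$ and $\sum_l|s_l^t|^2_{h_0^m}=\rho_m(\omega_0)$ pins $\lambda_1^t$ to a uniformly bounded interval, and the problem reduces to the uniform bound
\[
\int_M\Big(\sum_{l=1}^N e^{2\lambda_l^t}\,|s_l^t|^2_{h_0^m}\Big)^{-\gamma/m}\,\omega_0^n\ \le\ C,
\]
in which the only information about the spectrum $(\lambda_l^t)$ is the boundedness of $\lambda_1^t$ together with the standing hypothesis.

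Here the two hypotheses enter. The hypothesis that the $k$-th eigenvalue of $K_M^{-m}$ is controlled is precisely a restriction on how far the $L^2(\omega_t)$-Hilbert structure may degenerate before the $k$-th position in the spectrum --- a bound on $e^{\lambda_k^t}$ --- and its role is to forbid the degeneration from concentrating in fewer than $k$ directions in an uncontrolled way. Using it, I would split $M$ into the locus where the $k$-term subsum already dominates, so that $\sum_l e^{2\lambda_l^t}|s_l^t|^2\gtrsim\sum_{l\le k}|s_l^t|^2$, and a complementary neighbourhood $R_t$ of the common zero locus of $s_1^t,\dots,s_k^t$ on which the top section dominates, $\sum_l e^{2\lambda_l^t}|s_l^t|^2\gtrsim|s_1^t|^2$. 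On the first locus the integral is at most $\int_M\big(\sum_{l\le k}|s_l^t|^2\big)^{-\gamma/m}\omega_0^n$, which is finite and uniform in $t$ because $(s_1^t,\dots,s_k^t)$ spans a $k$-dimensional linear system and $\gamma<\alpha_{m,k}(M)$ (for the natural normalization of the higher $\alpha$-invariants); that one may pick such a $\gamma$ with $\gamma>\tfrac n{n+1}$ is exactly what $\alpha_{m,k}(M)>\tfrac n{n+1}$ gives. On $R_t$ the integral is controlled by $\int_M|s_1^t|^{-2\gamma'/m}\omega_0^n$ with a slightly smaller exponent $\gamma'=\gamma-\epsilon$ --- the loss $\epsilon$ being absorbed, by H\"older interpolation against the $\alpha_{m,k}$-bound on the overlap together with the smallness of $R_t$, into the margin of $\gamma$ over $\tfrac n{n+1}$ --- and this is finite and uniform provided $\gamma'<\alpha_{m,1}(M)$, i.e.\ provided $\alpha_{m,1}(M)>\tfrac n{n+1}-\epsilon$. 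Here $\epsilon=\epsilon(n,k,\alpha_{m,k}(M))$ is exactly the amount of room available for the trade, and it grows with the slack $\alpha_{m,k}(M)-\tfrac n{n+1}$. Adding the two contributions gives the required uniform bound, closes the continuity method, and produces the K\"ahler-Einstein metric.

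I expect the main obstacle to be the content of the third paragraph: converting the abstract hypothesis ``the $k$-th eigenvalue of $K_M^{-m}$ is controlled'' into the concrete dichotomy above --- that under it the worst singularity encountered along the path is that of a $k$-tuple of sections plus a genuinely subordinate divisorial piece --- and then carrying out the interpolation between the $\alpha_{m,k}$- and $\alpha_{m,1}$-estimates with an \emph{explicit} admissible $\epsilon$, uniformly in $t$. A secondary issue is to keep every constant --- in the partial $\mathcal C^0$-estimate, in the Moser-Trudinger inequality, and in the Demailly-Koll\'ar uniform-integrability statement underlying the higher $\alpha$-invariants --- independent of $t$; this is routine given the quantitative form of the cited results.
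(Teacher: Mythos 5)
Your overall frame (Aubin continuity method, partial $\mathcal C^0$-estimate to reduce to a Bergman potential, split the singularity between the $k$-dimensional linear system and the top section) is in the right spirit, but the core step is wrong, and the error is not the ``secondary issue'' you flag at the end --- it is the heart of the problem. Three points.

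First, you aim to establish a uniform bound $\int_M e^{-\gamma(\varphi_t-\sup\varphi_t)}\omega_0^n\leq C$ for a \emph{single} $\gamma>\tfrac{n}{n+1}$. Such a bound does not follow from the hypotheses and is not what the paper proves. When $\alpha_{m,1}(M)<\tfrac{n}{n+1}$ (which the theorem explicitly allows), a single section $s_1^t$ can have $\int_M|s_1^t|^{-2\gamma/m}\omega_0^n=\infty$ for $\gamma>\tfrac{n}{n+1}$, and the Bergman potential can degenerate onto such a section. No pointwise dichotomy on $M$ saves this: on the region $R_t$ your bound reduces precisely to the $\alpha_{m,1}$-integral with exponent too close to $\gamma$, and the ``H\"older interpolation against the $\alpha_{m,k}$-bound together with the smallness of $R_t$'' is not a real mechanism --- $R_t$ has no a priori smallness, and H\"older cannot trade between a lower exponent on one region and a higher one on another. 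The paper's replacement for this is a system of three \emph{scalar} inequalities: Proposition \ref{Einst-alg-approx} applied once at $k=1$ (with $\alpha$ close to $\alpha_{m,1}$, possibly below $\tfrac{n}{n+1}$), once at general $k$ (with $\alpha$ between $\tfrac{n}{n+1}$ and $\alpha_{m,k}$, carrying an extra $(1+\epsilon)I_k(\varphi)$ term), and the opposite estimate of Corollary \ref{I-J-ineq}, all combined in a nonnegative linear combination tuned so the $I_k(\varphi)$ and $\fint_M(-\varphi)\omega_\varphi^n$ terms cancel. The explicit $\epsilon(n,k,\alpha_{m,k})$ comes out of the positivity of the resulting coefficient of $\sup_M\varphi$.

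Second, you read the hypothesis ``the $k$-th eigenvalue of $K_M^{-m}$ is controlled'' as furnishing a uniform bound on $e^{\lambda_k^t}$ (equivalently $\mu_1/\mu_k$). It does not: it bounds $\log(\mu_1/\mu_k)$ by $(1+\epsilon)I(\psi_a)+C$, and $I(\psi_a)$ is exactly the quantity that is \emph{not} a priori bounded along the continuity path. This is the whole difficulty, and it is why the extra $I_k(\varphi)$ term appears in Proposition \ref{Einst-alg-approx}, and why the opposite estimate $\fint_M(-\varphi_t)\omega_{\varphi_t}^n+(n-k+1)I_k(\varphi_t)\leq n\sup_M\varphi_t$ from the $I$- and $J$-functionals is indispensable. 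Your proposal makes no reference to this inequality, nor to the fact that the controlled-eigenvalue bound grows with $I$; you cannot close the argument without both.

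Third, a structural remark: the paper's mechanism is Jensen's inequality combined with the Monge-Amp\`ere equation $e^{t\varphi}\omega_\varphi^n=e^f\omega^n$, which moves the estimate from $\omega_\varphi$-integrals to $\omega$-integrals and produces the scalar inequality (\ref{jensen})--(\ref{alg-approx}); your sketch does not engage with the PDE at this level. The ``main obstacle'' you identify at the end --- converting the eigenvalue control into the dichotomy and making the interpolation explicit --- is genuine, but it is unresolvable along the lines you propose; the correct route replaces the pointwise dichotomy by the integrated inequality bookkeeping above.
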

\noindent The proof of Theorem \ref{main-thm} relies on  Szekelyhidi's deep recent partial $\mathcal{C}^0$-estimate \cite{sze13}, and on a new estimate (Proposition \ref{Einst-alg-approx}) for certain K\"ahler-Einstein metrics.

Tian's work on the case $k=2$ includes, essentially, a proof of control of the second eigenvalue.  We conjecture this hypothesis is likewise valid for all $k$:
\begin{conjecture}\label{lambda-control}
For all $m$ such that $K_M^{-m}$ is very ample, for each natural number $k$ such that $2\leq k\leq n$, the $k$-th eigenvalue of $K_M^{-m}$ is controlled.
\end{conjecture}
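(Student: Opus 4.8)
The plan is to follow the architecture of Tian's treatment of the case $k=2$, in which control of the second eigenvalue of $K_M^{-m}$ is obtained by playing the partial $\mathcal{C}^0$-estimate against a Bochner--Kodaira lower bound for the spectrum. The first step is to recast the conjecture contrapositively in terms of Gromov--Hausdorff limits: if for some sequence of (suitably normalized) metrics the $k$-th eigenvalue of $K_M^{-m}$ degenerates, then by Szekelyhidi's partial $\mathcal{C}^0$-estimate one passes to a limiting $\mathbb{Q}$-Fano variety $M_\infty$ equipped with a weak K\"ahler--Einstein metric $\omega_\infty$ and a degenerate embedding by $K^{-m}$ compatible with the convergence; the hypothesis being false then translates into a spectral pathology of $(M_\infty,\omega_\infty)$, which one must rule out. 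In the limit $m\to\infty$ the ``$k$-th eigenvalue of $K_M^{-m}$,'' which is a Bergman-metric quantity, is comparable to the $k$-th eigenvalue $\lambda_k$ of the Laplacian of the K\"ahler--Einstein metric, so the target is a lower bound for $\lambda_k(\omega_\infty)$ depending only on $n$, $k$, and (through the partial estimate) on $\alpha_{m,k}(M)$.

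The second step is the spectral estimate itself. On the smooth K\"ahler--Einstein locus the case of $\lambda_1$ is classical: the Bochner formula together with $\Ric(\omega_\infty)=\omega_\infty$ forces $\lambda_1\geq 1$, with equality precisely when $M_\infty$ carries a nonzero holomorphic vector field, and in that borderline case the first eigenspace is spanned by the Hamiltonians of such fields --- hence accounted for by the action of $\mathrm{Aut}(M_\infty)$ on $H^0(M_\infty,K_{M_\infty}^{-m})$. I would try to upgrade this to the first $k-1$ eigenvalues by showing that every eigenfunction with eigenvalue below the critical threshold is again explained by the Lie algebra of automorphisms, reducing control of $\lambda_k$ to a bound on $\dim\mathfrak{aut}(M_\infty)$ in terms of $k$; this I would attempt by induction, either on $k$ directly or on $\dim M$ via the splitting of tangent cones along $\mathrm{Sing}(M_\infty)$.

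The main obstacle --- and the reason the statement is offered only as a conjecture --- is that higher eigenvalues, unlike $\lambda_1$, admit no lower bound derivable from a Ricci lower bound alone: eigenvalues may cluster arbitrarily near any positive value. The argument therefore cannot be purely curvature-theoretic or purely local; it must exploit the global algebraic rigidity of K\"ahler--Einstein Fanos (finiteness and reductivity of their automorphism groups) together with control of eigenfunctions near the singular set, where Cheeger--Colding theory and the cone structure of tangent cones enter. Converting the heuristic ``every sub-threshold eigenfunction comes from an automorphism'' into a theorem with the effective, explicitly computable constants that Theorem~\ref{main-thm} requires, on a limit that is only a singular variety, is precisely the difficulty.
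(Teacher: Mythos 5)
The statement you were asked to prove is Conjecture~\ref{lambda-control}, which the paper explicitly leaves open: immediately after stating it the author writes that its proof ``would be expected to combine analytic and algebraic ideas'' and that she hopes ``to address it in future work.'' There is no argument in the paper --- only the remark that the $k=2$ case is essentially contained in Tian's earlier work --- so there is nothing to compare your proposal against. To your credit, the proposal does not pretend otherwise; you correctly end by acknowledging that converting the heuristic into a theorem is exactly what is missing, which is also the paper's own assessment.

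One point worth flagging all the same: the ``$k$-th eigenvalue of $K_M^{-m}$'' in this paper is not a Laplacian eigenvalue. By definition one fixes a reference inner product $||\cdot||$ on $H^0(M,K_M^{-m})$, and for any other inner product $a$ the numbers $\mu_1(a)\geq\cdots\geq\mu_N(a)>0$ are the finite list obtained by simultaneously diagonalizing $a$ against $||\cdot||$. Control means the uniform inequality $\log\bigl(\mu_1(a)/\mu_k(a)\bigr)\leq(1+\epsilon)\,I(\psi_a)+C$ over \emph{all} $a\in\mathcal{M}_{\mathfrak{L}}$, with $\psi_a$ the associated Bergman potential. Your reformulation in terms of $\lambda_k$ of the Laplacian of a limiting K\"ahler--Einstein metric is a substantial reinterpretation: it restricts to inner products arising from a continuity path and invokes a comparison between the relative Gram-matrix eigenvalues and the Laplacian spectrum that would itself have to be established. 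The heuristic may well be on the right track (and perhaps reflects how Tian's $k=2$ argument proceeds), but as written the proposal is aimed at a related, possibly weaker, statement, and --- as you rightly diagnose --- the core difficulty is untouched.
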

This conjecture is simple, natural, and of considerable independent interest, and its proof in general would be expected to combine analytic and algebraic ideas.  We hope to address it in future work.

Still open, and very interesting, is the question of finding a Fano manifold $M$ (or many such $M$) which satisfy the alpha-invariants criterion of Theorem \ref{main-thm}, and which had not previously been known to admit K\"ahler-Einstein metrics.

Since Tian's Theorem \ref{tian} was enough to solve the K\"ahler-Einstein problem for surfaces, such a manifold $M$ would be of complex dimension at least 3.  Recent work in this direction includes computations by Cheltsov, Kosta, Shi and others (e.g.\ \cite{cheltsov09,shi10,ck14}), of  $\alpha_{m,1}(M)$ for some Fano 3-folds $M$ and  $\alpha_{m,2}(M)$ for all Fano 2-folds $M$.  Perhaps some of their methods can be adapted for higher alpha-invariants.

\subsection{Notation}

Throughout this paper $(M, \omega)$ will be a compact K\"ahler manifold of dimension $n$ (sometimes, where specified, with further properties:  projective, Fano, ...).
We write $V$ for $\int_M\omega^n$,  $\fint_M$ for the averaged integration operator $V^{-1}\int_M$, and for a real function $\varphi$, we use the notation
 \begin{eqnarray*}
\omega_{\varphi}&:=&\omega+\sqrt{-1}\de\db\varphi, \\
I_k(\varphi)
&:=& \sum_{r=0}^{k-2} \fint_M \sqrt{-1}\de\varphi\wedge\db\varphi\wedge\omega_{\varphi}{}^r\wedge\omega^{n-r-1}\\
&=&\fint_M \varphi
\left[\omega^{k-1}-\omega_{\varphi}{}^{k-1}\right]\wedge\omega^{n-k+1}.
\end{eqnarray*}

\subsection{Acknowledgements}

I am grateful to Hans-Joachim Hein and my advisor Gang Tian for many discussions on this subject.

\section{Approximation of K\"ahler potentials}

\begin{lemma} \label{expand-diff}
Let $\varphi$ and $\psi$ be K\"ahler potentials. Let $r$ be an integer, $1\leq r\leq n$. Then
\begin{multline*}
\int_M\left(\varphi \ \omega_\varphi{}^r-\psi \ \omega_\psi{}^r\right)\wedge\omega^{n-r}
=\int_M(\varphi-\psi) \ \left[\sum_{i=0}^r\omega_\varphi{}^i\wedge\omega_\psi{}^{r-i}\wedge\omega^{n-r}
-\sum_{i=0}^{r-1} \ \omega_\varphi{}^i\wedge\omega_\psi{}^{r-i-1}\wedge\omega^{n-r+1}\right].
\end{multline*}
\end{lemma}

\begin{proof}
\begin{eqnarray*}
\varphi \ \omega_\varphi{}^r-\psi \ \omega_\psi{}^r
&=&\varphi \ (\omega_\varphi{}^r-\omega_\psi{}^r)+(\varphi-\psi) \ \omega_\psi{}^r\\
&=&\varphi \ \sqrt{-1}\de\db(\varphi-\psi)\left(\sum_{i=0}^{r-1} \ \omega_\varphi{}^i\wedge\omega_\psi{}^{r-i-1}\right)
+(\varphi-\psi) \ \omega_\psi{}^r.
\end{eqnarray*}
Wedge with $\omega^{n-r}$, integrate, and apply Stokes' theorem:
\begin{eqnarray*}
&&\int_M\left(\varphi \ \omega_\varphi{}^r-\psi \ \omega_\psi{}^r\right)\wedge\omega^{n-r}\\
&=&\int_M\left[\varphi \ \sqrt{-1}\de\db(\varphi-\psi)\left(\sum_{i=0}^{r-1} \ \omega_\varphi{}^i\wedge\omega_\psi{}^{r-i-1}\right)
+(\varphi-\psi) \ \omega_\psi{}^r\right]\wedge\omega^{n-r}\\
&=&\int_M\left[(\varphi-\psi)\ \sqrt{-1}\de\db\varphi \left(\sum_{i=0}^{r-1} \ \omega_\varphi{}^i\wedge\omega_\psi{}^{r-i-1}\right)
+(\varphi-\psi) \ \omega_\psi{}^r\right]\wedge\omega^{n-r}\\
&=&\int_M(\varphi-\psi)\left[ (\omega_\varphi-\omega)\left(\sum_{i=0}^{r-1} \ \omega_\varphi{}^i\wedge\omega_\psi{}^{r-i-1}\right)
+ \omega_\psi{}^r\right]\wedge\omega^{n-r}\\
&=&\int_M(\varphi-\psi) \ \left[\sum_{i=0}^r\omega_\varphi{}^i\wedge\omega_\psi{}^{r-i}\wedge\omega^{n-r}
-\sum_{i=0}^{r-1} \ \omega_\varphi{}^i\wedge\omega_\psi{}^{r-i-1}\wedge\omega^{n-r+1}\right].
\end{eqnarray*}
\end{proof}

\begin{corollary} \label{expand-Tsum-diff}
Let $\varphi$ and $\psi$ be K\"ahler potentials. Let $k$ be an integer, $2\leq k\leq n$.  Then
\begin{multline*}
I_{k}(\varphi)-I_{k}(\psi)
=\fint_M(\varphi-\psi) \ \left[\omega^n-\sum_{i=0}^{k-1}\omega_\varphi{}^i\wedge\omega_\psi{}^{k-i-1}\wedge\omega^{n-k+1}
+\sum_{i=0}^{k-2} \ \omega_\varphi{}^i\wedge\omega_\psi{}^{k-i-2}\wedge\omega^{n-r+2}\right].
\end{multline*}
\end{corollary}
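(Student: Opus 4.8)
The plan is to reduce the claim directly to Lemma~\ref{expand-diff}. Starting from the closed-form expression $I_k(\varphi)=\fint_M\varphi\,[\omega^{k-1}-\omega_\varphi{}^{k-1}]\wedge\omega^{n-k+1}$ recorded in the Notation subsection, write
\[
I_k(\varphi)-I_k(\psi)=\fint_M\Bigl(\varphi\,[\omega^{k-1}-\omega_\varphi{}^{k-1}]-\psi\,[\omega^{k-1}-\omega_\psi{}^{k-1}]\Bigr)\wedge\omega^{n-k+1}.
\]
First I would split off the terms carrying $\omega^{k-1}$: they contribute $\fint_M(\varphi-\psi)\,\omega^{k-1}\wedge\omega^{n-k+1}=\fint_M(\varphi-\psi)\,\omega^n$, which supplies the leading $\omega^n$ of the asserted formula. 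What remains is $-\fint_M\bigl(\varphi\,\omega_\varphi{}^{k-1}-\psi\,\omega_\psi{}^{k-1}\bigr)\wedge\omega^{n-k+1}$.

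Next I would apply Lemma~\ref{expand-diff} with $r=k-1$; this is legitimate because $2\le k\le n$ forces $1\le k-1\le n-1$. Substituting $n-r=n-k+1$ and $n-r+1=n-k+2$, the lemma rewrites the remaining term as $-\fint_M(\varphi-\psi)$ wedged against
\[
\sum_{i=0}^{k-1}\omega_\varphi{}^i\wedge\omega_\psi{}^{k-1-i}\wedge\omega^{n-k+1}-\sum_{i=0}^{k-2}\omega_\varphi{}^i\wedge\omega_\psi{}^{k-2-i}\wedge\omega^{n-k+2}.
\]
Adding back the $\omega^n$ term and distributing the overall minus sign gives exactly the identity of the corollary (so that the exponent appearing as $n-r+2$ in the statement is to be read $n-k+2$).

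I do not expect a genuine obstacle: all the real work is already contained in Lemma~\ref{expand-diff}, and what remains is index bookkeeping. The two points to watch are that the choice $r=k-1$ stays inside the range $1\le r\le n$ demanded by the lemma --- which is exactly what the hypothesis $2\le k\le n$ guarantees --- and that the shift in the power of $\omega$ between the two sums produced by the lemma (from $\omega^{n-r}$ to $\omega^{n-r+1}$) is propagated correctly through the substitution $r\mapsto k-1$.
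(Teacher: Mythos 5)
Your proof is correct and is exactly the intended argument: the paper omits the proof, but the corollary is clearly meant to follow by applying Lemma~\ref{expand-diff} with $r=k-1$ to the closed-form expression $I_k(\varphi)=\fint_M\varphi\,[\omega^{k-1}-\omega_\varphi{}^{k-1}]\wedge\omega^{n-k+1}$, which is what you do. You also correctly note that the exponent $n-r+2$ appearing in the corollary's statement should be read $n-k+2$.
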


\begin{proposition} \label{approximate-T}
Let $c>0$.   Let $k$ be an integer, $2\leq k\leq n$.  There exists $C=C(k, c)$, such that if $\varphi$ and $\psi$ are K\"ahler potentials with
\[
\sup_M\left\lvert\varphi-\psi\right\rvert\leq c,
\]
then
\[
\left\lvert I_{k}(\varphi)-I_{k}(\psi)\right\rvert\leq C.
\]
\end{proposition}

\begin{proof}
By Corollary \ref{expand-Tsum-diff}, for any real number $a$,
\[
I_{k}(\varphi)-I_{k}(\psi)
=\fint_M(\varphi-\psi+a)
\left[\omega^n-\sum_{i=0}^{k-1}\omega_\varphi{}^i\wedge\omega_\psi{}^{k-i-1}\wedge\omega^{n-k+1}
+\sum_{i=0}^{k-2} \ \omega_\varphi{}^i\wedge\omega_\psi{}^{k-i-2}\wedge\omega^{n-r+2}\right].
\]
(The constant $a$ can be added since
\[
\fint_M
\left[\omega^n-\sum_{i=0}^{k-1}\omega_\varphi{}^i\wedge\omega_\psi{}^{k-i-1}\wedge\omega^{n-k+1}
+\sum_{i=0}^{k-2} \ \omega_\varphi{}^i\wedge\omega_\psi{}^{k-i-2}\wedge\omega^{n-r+2}\right]
=1 -(k-1)+(k-2) =0.)
\]

Since $\sup_M|\varphi-\psi|\leq c$,
\[
0\leq\varphi-\psi+c\leq 2c.
\]
Also the forms
\[
\omega^n
+\sum_{i=0}^{k-2} \ \omega_\varphi{}^i\wedge\omega_\psi{}^{k-i-2}\wedge\omega^{n-r+2},
\qquad\qquad
\sum_{i=0}^{k-1}\omega_\varphi{}^i\wedge\omega_\psi{}^{k-i-1}\wedge\omega^{n-k+1}
\]
are positive.  Hence we have the pointwise inequalities
\[
0\leq
(\varphi-\psi+c)\left[\omega^n
+\sum_{i=0}^{k-2} \ \omega_\varphi{}^i\wedge\omega_\psi{}^{k-i-2}\wedge\omega^{n-r+2}\right]
\leq 2c\left[\omega^n
+\sum_{i=0}^{k-2} \ \omega_\varphi{}^i\wedge\omega_\psi{}^{k-i-2}\wedge\omega^{n-r+2}\right],
\]
\[
-2c\left[\sum_{i=0}^{k-1}\omega_\varphi{}^i\wedge\omega_\psi{}^{k-i-1}\wedge\omega^{n-k+1}\right]\leq
-(\varphi-\psi-c)\left[\sum_{i=0}^{k-1}\omega_\varphi{}^i\wedge\omega_\psi{}^{k-i-1}\wedge\omega^{n-k+1}\right]
\leq 0.
\]
Summing, integrating and averaging,
\[
-2(k-1)c\leq I_{k}(\varphi)-I_{k}(\psi)\leq 2(k-1)c.
\]
\end{proof}

\section{Algebraic preliminaries}

In this section $M$ is a variety, $\mathfrak{L}$ an ample line bundle over $M$, and $m$ a natural number.

For each vector subspace $V$ of $H^0(M, \mathfrak{L}^m)$, there is a natural evaluation section $ev_V$ of $V^*\otimes\mathfrak{L}^m$, $
\ev_V(x)=\left(s\mapsto s_x\right)$.
Denote by $\Bs(V)$ the zero locus of this section; that is, the set of points $x\in M$ such that for all sections $s$ in $V$, $s_x=0$.  Projectivizing $\ev_V$ yields a natural map
\[
\iota:=[\ev_V]:M\setminus \Bs(V) \to \mathbb{CP}(V^*).
\]

Let $\omega$ be a K\"ahler metric in $2\pi c_1(\mathfrak{L})$, and $h$ be a hermitian metric on $\mathfrak{L}$ whose curvature is $\omega$.  These induce an inner product $||\cdot ||$ on $V$,
\[
||s||^2
=\int_M |s|^2_{h^m}\omega^n,
\]
hence a Fubini-Study metric on $\mathbb{CP}(V^*)$, which we may pull back under $\iota$ to obtain a nonnegative $(1,1)$-form $\omega_V$ on $M\setminus \Bs(V)$.

\subsection{Definitions}

This inner product, coupled with $h$, also induce a hermitian metric  on $V^*\otimes\mathfrak{L}^m$.  Let
\[
\rho_{\omega,m,V}:M\to\mathbb{R}^{\geq 0}
 \]
be the squared norm of $\ev_V$ with respect to that hermitian metric.  (As the notation suggests, $\rho_{\omega,m,V}$ depends on $\omega$ but not otherwise on $h$.) Equivalently, if $(s_1,\ldots s_k)$ is a basis for $V$ which is orthonormal with respect to $||\cdot||$, then $\rho_{\omega,m,V}
=\sum_{i=1}^k\left\lvert s_i\right\rvert^2_{h^m}$.
Obviously $\rho_{\omega,m,V}$ vanishes precisely on $\Bs(V)$.  It is easily checked that $\tfrac{1}{m} \log\rho_{\omega,m,V}$ is the potential with respect to $\omega$ of the (possibly distributional) pullback $(1,1)$-form $\tfrac{1}{m}\omega_V$:
\[
\tfrac{1}{m}\omega_V=\omega + \tfrac{1}{m}\sqrt{-1}\de\db \log\rho_{\omega,m,V}.
\]

\begin{definition}
The \emph{($m$-th) Bergman kernel} of $\omega$ is $\rho_{\omega,m}:=\rho_{\omega,m,H^0(M, \mathfrak{L}^m)}$.
\end{definition}

\begin{definition}
Let $\mathcal{G}_k$ be the Grassmannian of $k$-dimensional vector subspaces of $H^0(M, \mathfrak{L}^m)$.  The \emph{($(m,k)$-th) alpha-invariant} of $\mathfrak{L}$ is
\[
\alpha_{m,k}(\mathfrak{L})
:=\sup\left\{\alpha>0:\infty>\sup_{V\in \mathcal{G}_k}\int_M\rho_{\omega,m,V}{}^{-\alpha/m}\omega^n\right\}.
\]
In particular, for a Fano manifold $M$, the \emph{($(m,k)$-th) alpha-invariant} of $M$ is $\alpha_{m,k}(M):=\alpha_{m,k}(K_M^{-1})$.
\end{definition}
\noindent It is easily checked that, as implied by the notation, $\alpha_{m,k}(\mathfrak{L})$ is independent of the chosen $\omega$, $h$.

\subsection{Control on Bergman metrics}\label{eigenvalue-conjecture}

In this subsection let  $\mathfrak{L}$ be very ample; let $m$ always be 1, and let $V$ always be the whole vector space $H^0(M, \mathfrak{L})$.  Thus $\Bs(V)=\emptyset$, and
\[
\iota:M\to\mathbb{CP}(V^*)
\]
is a smooth embedding.

Let $\mathcal{M}_\mathfrak{L}\cong GL_{\mathbb{C}}(V)/U(V)$ be the homogeneous space of inner products on $V$.   For an inner product $a\in \mathcal{M}_\mathfrak{L}$, as before there is an induced Fubini-Study metric $\Omega_a$ on $\mathbb{CP}(V)$, and as before there is a \emph{Bergman metric} $\omega_a:=\iota^*\Omega_a$ on $M$ induced by pulling back $\Omega_a$.

Also as before let $\omega$ be a K\"ahler metric in $2\pi c_1(\mathfrak{L})$, and $h$ be a hermitian metric on $\mathfrak{L}$ whose curvature is $\omega$.  These induce a reference inner product $||\cdot ||$ on $V$,
\[
||s||^2
=\int_M |s|^2_{h}\omega^n,
\]

For any $a\in\mathcal{M}_\mathfrak{L}$, we may simultaneously diagonalize $||\cdot||$ and $a$, producing a basis $(s_1,\ldots s_N)$ of V and positive reals $\mu_1(a)\geq\mu_2(a)\geq\cdots>0$  such that
\begin{itemize}
\item $(s_1,\ldots s_N)$  is orthonormal with respect to $||\cdot||$;
\item $(\mu_1(a)^{1/2}s_1,\ldots \mu_N(a)^{1/2}s_N)$  is orthonormal with respect to $a$.
\end{itemize}
It is easily checked that the function $\psi_a:=\log\left(\sum_{j=1}^N\mu_j(a)|s_j|_{h}^2\right)$
is the K\"ahler potential  with respect to $\omega$ of the Bergman metric $\omega_a$:  that is, $\omega_a= \omega+\sqrt{-1}\de\db\psi_a$.

\begin{definition}
Let $2\leq k\leq n$.
\begin{enumerate}
\item Let $D$ be a subset of $\mathcal{M}_\mathfrak{L}$. \emph{The $k$-th eigenvalue of $\mathfrak{L}$ is controlled on $D$}, if for each $\epsilon>0$, there exists
$C=C(n, k, \omega,\epsilon)$, such that for all inner products $a$ in $D$,
\[
\log\left[\frac{\mu_1(a)}{\mu_{k}(a)}\right]
\leq (1+\epsilon)I(\psi_a) + C.
\]
\item \emph{The $k$-th eigenvalue of $\mathfrak{L}$ is controlled}, if it is controlled throughout the full set $\mathcal{M}_\mathfrak{L}$.
\end{enumerate}
\end{definition}

The $k$-th eigenvalue of $\mathfrak{L}$ is obviously controlled on any \emph{compact} subset $D$ of $\mathcal{M}_\mathfrak{L}$; what is not obvious is whether it is controlled on the full, noncompact, $\mathcal{M}_\mathfrak{L}$.

The following non-sharp version of the hypothesis will suffice for a slightly weaker existence theorem:

\begin{definition}
Let $2\leq k\leq n$.
\begin{enumerate}
\item Let $D$ be a subset of $\mathcal{M}_\mathfrak{L}$. \emph{The $k$-th eigenvalue of $\mathfrak{L}$ is weakly controlled on $D$}, if there exists
$C=C(n, k, \omega)$, such that for all inner products $a$ in $D$,
\[
\log\left[\frac{\mu_1(a)}{\mu_{k}(a)}\right]
\leq CI(\psi_a) + C.
\]
\item \emph{The $k$-th eigenvalue of $\mathfrak{L}$ is weakly controlled}, if it is weakly controlled throughout the full set $\mathcal{M}_\mathfrak{L}$.
\end{enumerate}
\end{definition}

\section{Estimates for Einstein potentials by means of algebraic approximation}\label{Einst-alg-approx-sect}

In this section, and throughout the rest of this paper, $M$ is Fano and the K\"ahler metric $\omega$ is in $2\pi c_1(M)$.  Thus $V=\int_M\omega^n=(2\pi)^n c_1(M)^n$.  Let $h$ be a hermitian metric on $K_M^{-1}$ whose curvature is $\omega$.

\subsection{On partial $\mathcal{C}^0$-estimates}

For a class $\mathcal{A}$ of K\"ahler metrics, an estimate of the form
\[
\inf_{\omega\in\mathcal{A}}\rho_{\omega,m}\geq a>0
\]
is called a \emph{partial $\mathcal{C}^0$-estimate}.  Such estimates are expected to hold uniformly for quite general classes of metric.

They are in general proved using convergence theory for classes of manifolds whose metrics satisfy some analytic constraint.  Tian's work on complex surfaces \cite{tia90} included a partial $\mathcal{C}^0$-estimate for K\"ahler-Einstein surfaces, proved using the orbifold convergence of K\"ahler-Einstein 4-manifolds.  Deep, very recent work \cite{cds15i,cds15ii,cds15iii,ds14,sze13,tian12,tia13} has produced partial $\mathcal{C}^0$-estimates for various classes of metrics in arbitrary dimension, proved using Cheeger-Colding theory.

In this paper we will use one of these, Szekelyhidi's partial $\mathcal{C}^0$-estimate  along the Aubin continuity method:

\begin{theorem}[\cite{sze13}]\label{partial-c0}
Let $T\leq1$ be a positive real.  Let $(\omega_t)$, for $t\in(0,T)$, be K\"ahler metrics, such that
\[
\Ric(\omega_{t})=t\omega_{t}+(1-t)\omega.
\]
Then there exist a natural number $m=m(M,\omega)$ and a constant $a=a(M,\omega)$, such that the family $(\omega_t)$ satisfies a partial $\mathcal{C}^0$-estimate:  for all $t\in(0,T)$,
\[\rho_{\omega_{t},m}\geq a.
\]
\end{theorem}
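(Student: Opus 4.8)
The plan is to carry out the contradiction argument that, since Donaldson--Sun \cite{ds14}, has become standard for partial $\mathcal{C}^0$-estimates, adapting it to the non-Einstein metrics of the Aubin continuity path. Suppose the conclusion fails, so that for every $m$ there is no admissible $a$. A routine diagonal argument --- using the uniform upper Bergman bound $\rho_{\omega_t,m}\leq C(n,m)$ (valid since $\Ric(\omega_t)=t\omega_t+(1-t)\omega\geq 0$), together with the elementary comparison between $\rho_{\omega,m}$ and $\rho_{\omega,km}$ coming from powers of the extremal section --- then produces a sequence $t_i\in(0,T)$ and points $p_i\in M$ with
\[
\rho_{\omega_{t_i},m}(p_i)\longrightarrow 0\qquad\text{for every fixed }m .
\]
Before passing to a limit I would discard the regime $t_i\to 0$: as $t\to 0^+$ the equation degenerates to $\Ric(\omega_0)=\omega$, a Calabi--Yau-type complex Monge--Amp\`ere equation with a unique smooth positive solution $\omega_0$, and the standard a priori estimates give $\omega_t\to\omega_0$ in $\mathcal{C}^\infty$; hence for any $m$ with $K_M^{-m}$ very ample one has $\rho_{\omega_{t_i},m}\geq\tfrac12\min_M\rho_{\omega_0,m}>0$ eventually, contradicting the display. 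So I may assume $t_i\geq t_0$ for a fixed $t_0=t_0(M,\omega)>0$.

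On the range $t\in[t_0,T)$ one has the genuine positivity $\Ric(\omega_t)\geq t_0\omega_t>0$ and the fixed volume $\int_M\omega_t^n=V$; Myers' theorem bounds the diameter and Bishop--Gromov gives uniform volume non-collapsing $\mathrm{Vol}(B_r(x))\geq\kappa r^{2n}$. Passing to a subsequence, let $(M_\infty,d_\infty,p_\infty)$ be a pointed Gromov--Hausdorff limit. I would then invoke the structure theory: by Cheeger--Colding--Tian (using that the $\omega_{t_i}$ are K\"ahler) the regular set $\mathcal{R}\subset M_\infty$ is open and dense and carries a smooth K\"ahler metric to which the $\omega_{t_i}$ converge in $\mathcal{C}^\infty_{\mathrm{loc}}$; and, in our setting, $M_\infty$ is a normal projective variety --- so its singular locus has complex codimension at least $2$ --- on which $K_{M_\infty}^{-1}$ is an ample $\mathbb{Q}$-line bundle carrying a natural continuous Hermitian metric $h_\infty$ whose curvature current is the weak limit of the $\omega_{t_i}$. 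The crucial point is that all the relevant constants, and in particular a single power $m_0$ making $K_{M_\infty}^{-m_0}$ very ample with $L^2$-controlled global sections, can be taken to depend only on $n$, $\kappa$ and the diameter bound --- hence only on $M$ and $\omega$.

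The last step transplants sections. By H\"ormander's $L^2$-estimate for $\db$ on $M_\infty$ (applied to the possibly singular metric $h_\infty^{m_0}$, whose curvature current is positive; the singular locus, of complex codimension $\geq 2$, is negligible for $L^2$-holomorphic sections) one constructs $S_\infty\in H^0(M_\infty,K_{M_\infty}^{-m_0})$ with $\|S_\infty\|_{L^2}=1$ and $|S_\infty|_{h_\infty^{m_0}}(p_\infty)\geq c_0>0$, where $c_0$ depends only on the structural constants. One then transplants $S_\infty$ to $M$: transfer it via the $\mathcal{C}^\infty_{\mathrm{loc}}$-identifications over the regular part, multiply by a cutoff, and correct by solving $\db$ on $(M,\omega_{t_i})$ --- where the Bochner--Kodaira positivity needed is furnished by $\Ric(\omega_{t_i})\geq t_0\omega_{t_i}>0$. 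For $i$ large this yields an $L^2(\omega_{t_i})$-normalized section $S_i\in H^0(M,K_M^{-m_0})$ with $\rho_{\omega_{t_i},m_0}(p_i)\geq|S_i|^2_{\omega_{t_i}}(p_i)\geq(c_0/2)^2>0$, contradicting the $m=m_0$ instance of the display; this completes the proof.

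I expect the main obstacle to be the middle step: showing that the Gromov--Hausdorff limit $M_\infty$ is a normal projective variety with the stated control on its singular locus and its anticanonical bundle, \emph{with constants uniform along the whole Aubin path}. In the strictly Einstein case this is the Donaldson--Sun theorem \cite{ds14}; extending it to the merely Ricci-bounded-below metrics occurring here --- via Cheeger--Colding theory, a careful analysis of the degeneration of the line bundles, and effective $L^2$-estimates --- is exactly the substance of \cite{sze13}. By comparison, the a priori geometric bounds of the first two paragraphs and the $\db$-surgery of the last step are routine once that machinery is available.
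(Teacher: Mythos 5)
The paper does not prove Theorem \ref{partial-c0}; it imports it verbatim from Sz\'ekelyhidi \cite{sze13} and uses it as a black box, so there is no internal proof to compare against. Your sketch is a plausible high-level outline of the Donaldson--Sun-style contradiction argument that underlies \cite{sze13}: exclude the $t\to 0$ regime (where the Aubin path converges smoothly to the solution of $\Ric(\omega_0)=\omega$); on $t\geq t_0$ use $\Ric(\omega_t)\geq t_0\omega_t>0$ to get diameter and non-collapsing bounds, pass to a pointed Gromov--Hausdorff limit, invoke Cheeger--Colding(--Tian) structure theory and algebraicity of the limit, build a peak section by H\"ormander $L^2$-estimates, and transplant it back via a cutoff and $\db$-correction. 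You also correctly identify where the real work lies.

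Two cautions. First, the aside that the upper Bergman bound $\rho_{\omega_t,m}\leq C(n,m)$ is ``valid since $\Ric(\omega_t)\geq 0$'' is not self-contained: the Moser-iteration argument needs a Sobolev constant, hence non-collapsing, hence the diameter bound you only obtain afterward via Myers on $t\geq t_0$; the order of steps needs to be reshuffled so the diagonalization is run only after discarding $t<t_0$. Second, and more substantively, the middle paragraph --- that $M_\infty$ is a normal projective variety with singular set of complex codimension $\geq 2$, that $K_{M_\infty}^{-1}$ is an ample $\mathbb{Q}$-line bundle with a well-behaved limiting metric, and that a uniform very-ample power $m_0$ with controlled $L^2$-sections exists, all with constants depending only on $(M,\omega)$ --- \emph{is} Sz\'ekelyhidi's theorem. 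As you say yourself, the proposal is a road map to \cite{sze13}, not an independent proof; that is the right posture to take, since this is exactly how the paper treats the statement.
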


The importance of partial $\mathcal{C}^0$-estimates lies in the following standard result (\cite{tia90}, see also \cite{tos12}), proved using Moser iteration and a Sobolev inequality of Croke and Li.

\begin{proposition}\label{algebraic-approximate-potential}
Let $a$ and $\lambda$ be positive reals.  There exists a constant $C=C(n,m,N,V,a,\lambda)$, such that if
\begin{itemize}
\item $\Ric(\omega)\geq \lambda$,
\item $\varphi\in\mathcal{C}^\infty(M)$ is such that
\begin{itemize}
\item $\omega_\varphi$ is K\"ahler
\item $\Ric(\omega_\varphi)\geq \lambda$
\item $\rho_{\omega_\varphi,m}\geq a$
\end{itemize}
\end{itemize}
then there exist
\begin{itemize}
\item real numbers $1=\lambda_1\geq \lambda_2\geq\cdots\geq \lambda_N>0$
\item an $h$-orthonormal basis $(s_1, s_2, \ldots s_N)$ of $\sections$
\end{itemize}
such that
\[
\sup_M\left\lvert\varphi-\sup_M\varphi
-\tfrac{1}{m}\log\left(\sum_{j=1}^N\lambda_j|s_j|_{h^m}^2\right)\right\rvert\leq C.
\]
\end{proposition}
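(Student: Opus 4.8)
The plan is to write down the eigenvalues $(\lambda_j)$ and the basis $(s_j)$ explicitly by simultaneously diagonalizing two natural inner products on $\sections$, to identify $\varphi$ minus the resulting algebraic potential with a constant multiple of $\log\rho_{\omega_\varphi,m}$, and then to bound that Bergman kernel above and below. Concretely, introduce on $\sections$ the inner product $a_\varphi(s,s):=\int_M|s|^2_{h^m}e^{-m\varphi}\,\omega_\varphi^n$ --- the $L^2$-product attached to the pair $(\omega_\varphi,\ he^{-\varphi})$, since $he^{-\varphi}$ is a hermitian metric on $K_M^{-1}$ with curvature $\omega_\varphi$ --- together with the reference product $\|s\|^2:=\int_M|s|^2_{h^m}\,\omega^n$. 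Simultaneously diagonalizing produces a $\|\cdot\|$-orthonormal basis $(s_1,\dots,s_N)$ and reals $\mu_1\geq\cdots\geq\mu_N>0$ with $(\mu_1^{1/2}s_1,\dots,\mu_N^{1/2}s_N)$ orthonormal for $a_\varphi$; set $\lambda_j:=\mu_j/\mu_1$, so that $1=\lambda_1\geq\cdots\geq\lambda_N>0$, and put $\psi:=\tfrac1m\log\bigl(\sum_{j}\lambda_j|s_j|^2_{h^m}\bigr)$ for the candidate potential. Unwinding the definition of $\rho_{\omega_\varphi,m}$ from an $a_\varphi$-orthonormal basis gives $\rho_{\omega_\varphi,m}=\sum_{j}\mu_j|s_j|^2_{h^m}e^{-m\varphi}$, hence
\[
\varphi-\psi=\tfrac1m\log\mu_1-\tfrac1m\log\rho_{\omega_\varphi,m},
\]
so it remains to control $\log\rho_{\omega_\varphi,m}$ and the normalization gap $\tfrac1m\log\mu_1-\sup_M\varphi$.

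The lower bound $\rho_{\omega_\varphi,m}\geq a$ is the hypothesis. For an upper bound I would invoke the ``easy half'' of a partial $\mathcal{C}^0$-estimate: whenever $\Ric(\omega')\geq\lambda>0$ one has $\rho_{\omega',m}\leq C(n,m,N,V,\lambda)$. Indeed $\int_M\rho_{\omega',m}\,\omega'^n=N$, hence $\int_M(\log\rho_{\omega',m})^+\,\omega'^n\leq N$ (using $(\log t)^+\leq t$); the identity $\tfrac1m\,\omega_V=\omega'+\tfrac1m\sqrt{-1}\de\db\log\rho_{\omega',m}$ for the pulled-back Fubini--Study form $\omega_V\geq0$ (with $V=\sections$) gives, traced against $\omega'$, the differential inequality $\Delta_{\omega'}\log\rho_{\omega',m}\geq -mn$; and $\Ric(\omega')\geq\lambda$ yields, through the Bonnet--Myers diameter bound and the Croke--Li Sobolev inequality, a Sobolev inequality on $(M,\omega')$ whose constant depends only on $n,\lambda,V$ (note $\int_M\omega'^n=V$ is cohomological). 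De Giorgi--Nash--Moser iteration applied to the subsolution $\log\rho_{\omega',m}$ then produces the bound. Taking $\omega'=\omega_\varphi$ (legitimate since $\Ric(\omega_\varphi)\geq\lambda$) gives $|\log\rho_{\omega_\varphi,m}|\leq C_0(n,m,N,V,a,\lambda)$, hence $\bigl|\varphi-\psi-\tfrac1m\log\mu_1\bigr|\leq C_0/m$ pointwise.

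Taking $\sup_M$ in this inequality gives $\bigl|\sup_M\varphi-\tfrac1m\log\mu_1-\sup_M\psi\bigr|\leq C_0/m$, so it suffices to bound $\sup_M\psi$. Since $0<\lambda_j\leq1$, pointwise $|s_1|^2_{h^m}\leq\sum_{j}\lambda_j|s_j|^2_{h^m}\leq\sum_{j}|s_j|^2_{h^m}=\rho_{\omega,m}$: the right inequality together with the upper bound $\rho_{\omega,m}\leq C(n,m,N,V,\lambda)$ above --- now for $\omega'=\omega$, which is where the hypothesis $\Ric(\omega)\geq\lambda$ is used --- bounds $\sup_M\psi$ above, while the left inequality together with $\sup_M|s_1|^2_{h^m}\geq V^{-1}\int_M|s_1|^2_{h^m}\,\omega^n=V^{-1}$ (as $\|s_1\|=1$) bounds it below by $-\tfrac1m\log V$. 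Hence $|\sup_M\psi|\leq C_3(n,m,N,V,\lambda)$, and
\[
\bigl|\varphi-\sup_M\varphi-\psi\bigr|\leq\bigl|\varphi-\psi-\tfrac1m\log\mu_1\bigr|+\bigl|\tfrac1m\log\mu_1-\sup_M\varphi\bigr|\leq\tfrac{2C_0}{m}+C_3=:C,
\]
which is the assertion, with eigenvalues $(\lambda_j)$ and $h$-orthonormal basis $(s_j)$.

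The one genuinely non-formal ingredient is the Moser-iteration step bounding $\rho_{\omega',m}$ from above: keeping the Sobolev constant uniform over all admissible $\omega$ and $\omega_\varphi$ is precisely what forces both Ricci lower bounds into the hypotheses, and this is the analytic content already isolated in \cite{tia90} (see also \cite{tos12}). Everything else is algebraic bookkeeping around the two diagonalizing inner products; in particular the seemingly ad hoc subtraction of $\sup_M\varphi$ in the statement is exactly what the identity $\varphi-\psi=\tfrac1m\log\mu_1-\tfrac1m\log\rho_{\omega_\varphi,m}$ dictates, once one checks (as above) that the normalization constant $\tfrac1m\log\mu_1$ differs from $\sup_M\varphi$ by a controlled amount.
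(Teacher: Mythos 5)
Your proof is correct, and it follows precisely the route the paper indicates: the paper does not actually prove Proposition~\ref{algebraic-approximate-potential}, but cites it as standard (from \cite{tia90}, see also \cite{tos12}) and says it is ``proved using Moser iteration and a Sobolev inequality of Croke and Li,'' which is exactly the analytic core of your argument, combined with the same algebraic normalization via simultaneous diagonalization of the two $L^2$ inner products.

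A couple of small observations. The one step that a careful reader will want to see spelled out is your assertion that Moser iteration yields the upper bound $\sup_M (\log \rho_{\omega',m})^+ \leq C$: you correctly identify $\log\rho_{\omega',m}$ as a subsolution of $\Delta_{\omega'} u = -mn$, control $\|u^+\|_{L^1(\omega'^n)} \leq N$ by $(\log t)^+ \leq t$ together with $\int_M \rho\, \omega'^n = N$, and note that Bonnet--Myers plus Croke--Li gives a Sobolev constant depending only on $n,\lambda,V$; this is the standard local maximum estimate for subsolutions and the dependence of all constants is as you state. Also your identity $\rho_{\omega_\varphi,m} = \sum_j \mu_j |s_j|^2_{h^m} e^{-m\varphi}$ is correct, since $h_\varphi := h e^{-\varphi}$ has curvature $\omega_\varphi$ and the $\mu_j^{1/2} s_j$ are orthonormal for the $L^2$ pairing built from $(\omega_\varphi, h_\varphi)$. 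The bookkeeping with $\sup_M\psi$, giving $|\sup_M\psi| \leq C_3(n,m,N,V,\lambda)$, and the final triangle-inequality assembly are correct, and the constant you produce depends on exactly the parameters $(n,m,N,V,a,\lambda)$ claimed in the statement.
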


\subsection{The estimate}

This estimate is a generalization of those in \cite{tia87, tia90, tia91}; a good exposition is available in \cite{tos12}.  Fix a natural number $m$ sufficiently large that $K_M^{-m}$ is very ample.  Let $N$ be the dimension of $\sections$.

\begin{proposition} \label{Einst-alg-approx}
Let $k$ be a natural number, with $k\leq n$.  Fix $\epsilon>0$.  Let $\alpha$, $a$, $\delta$, $A$ be positive reals.  There exists a constant $C=C(n,m,k,\omega,\alpha,a, \delta,\epsilon, A)$, such that if
\begin{enumerate}
\item (partial $\mathcal{C}^0$-estimate) $\varphi$ is a K\"ahler potential, with $\rho_{\omega_\varphi,m}\geq a$,
\item (alpha-invariants criterion) $\alpha_{m,k}(M)>\alpha$,
\item if $k\geq 3$, the $k$-th eigenvalue of $K_M^{-m}$ is controlled, with constant $A$ for ratio $1+\epsilon$;
\end{enumerate}
if, for some real number $t$ with $\delta\leq t \leq 1$,
$\varphi$ solves the Aubin continuity-method equations at $t$:
\[
\begin{cases}
\Ric(\omega_{\varphi})=t\omega_{\varphi}+(1-t)\omega,\\
\displaystyle\int_Me^{t\varphi}\omega_{\varphi}{}^n=V,
\end{cases}
\]
then
\begin{enumerate}
\item \label{k-1} if $k=1$,
\[
\sup_M\varphi
\leq\frac{1-\alpha}{\alpha}\fint_M (-\varphi) \ \omega_{\varphi}{}^n
+ C;
\]
\item \label{k-geq-2} if $k\geq2$,
\[
\sup_M\varphi
\leq\frac{1-\alpha}{\alpha}\fint_M (-\varphi) \ \omega_{\varphi}{}^n
+(1+\epsilon)I_k(\varphi)+ C.
\]
\end{enumerate}
\end{proposition}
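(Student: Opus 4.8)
The plan is to combine the algebraic approximation of the Einstein potential $\varphi$ (Proposition~\ref{algebraic-approximate-potential}, whose hypotheses are met since the continuity-method equation for $\delta\le t\le 1$ forces $\Ric(\omega)$ and $\Ric(\omega_\varphi)$ to be uniformly bounded below, and since we assume the partial $\mathcal{C}^0$-estimate) with the alpha-invariant integrability bound and, when $k\ge 2$, with the eigenvalue-control hypothesis. Write $\varphi_{\mathrm{norm}}:=\varphi-\sup_M\varphi$. By Proposition~\ref{algebraic-approximate-potential} there are $1=\lambda_1\ge\cdots\ge\lambda_N>0$ and an $h$-orthonormal basis $(s_j)$ of $\sections$ with
\[
\sup_M\Bigl\lvert\varphi_{\mathrm{norm}}-\tfrac1m\log\bigl(\textstyle\sum_j\lambda_j|s_j|_{h^m}^2\bigr)\Bigr\rvert\le C.
\]
Set $\psi:=\tfrac1m\log(\sum_j\lambda_j|s_j|_{h^m}^2)$; this is precisely the Bergman-type potential $\psi_a$ attached to the inner product $a$ on $\sections$ with eigenvalues $\lambda_j^{-1}$ relative to $h^m$, so $\omega_\psi$ is a Bergman metric and $\sup_M|\varphi_{\mathrm{norm}}-\psi|\le C$.

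First I would handle the lower-order terms using the uniform $C^0$-closeness of $\varphi_{\mathrm{norm}}$ to $\psi$: by Proposition~\ref{approximate-T}, $|I_k(\varphi)-I_k(\psi)|$ is bounded by a constant depending only on $k,C$, and similarly one replaces $\fint_M(-\varphi)\,\omega_\varphi^n$-type quantities by their $\psi$-counterparts up to a constant (using that $\sup_M|\varphi_{\mathrm{norm}}-\psi|\le C$ and that $\omega_\varphi^n$ has unit average mass). So it suffices to prove the stated inequality with $\psi$ in place of $\varphi$ on the right, and with $\sup_M\varphi$ on the left replaced by a quantity bounded in terms of $-\inf_M\psi$ plus a constant. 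The heart of the estimate is then to bound $-\inf_M\psi=-\tfrac1m\log(\inf_M\sum_j\lambda_j|s_j|_{h^m}^2)$. For $k=1$: since $\lambda_1=1$, we have $\sum_j\lambda_j|s_j|^2_{h^m}\ge|s_1|^2_{h^m}$, and the alpha-invariant hypothesis $\alpha_{m,1}(M)>\alpha$ gives $\int_M|s_1|_{h^m}^{-2\alpha/m}\omega^n\le C'$ for a uniform $C'$ (the span of $s_1$ is a point of $\mathcal{G}_1$, and $\alpha_{m,1}$ is attained uniformly over $\mathcal{G}_1$); a standard mean-value / Jensen argument against $\omega_\varphi^n$, using that $\omega_\varphi^n=e^{\,t\varphi}\,(\text{bounded density})\,\omega^n$ from the continuity equation, converts this into the bound $-\inf_M|s_1|^2_{h^m}\le \text{const}\cdot\fint_M(-\varphi)\,\omega_\varphi^n+C$ with the coefficient $\tfrac{1-\alpha}{\alpha}$, which is exactly case~\ref{k-1}.

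For $k\ge2$ the point is that $\sum_{j}\lambda_j|s_j|^2_{h^m}\ge\lambda_k\sum_{j=1}^k|s_j|^2_{h^m}=\lambda_k\,\rho_{\omega,m,V_k}$ where $V_k:=\mathrm{span}(s_1,\dots,s_k)\in\mathcal{G}_k$, so
\[
-\inf_M\psi\le\tfrac1m\log(1/\lambda_k)-\tfrac1m\log\bigl(\inf_M\rho_{\omega,m,V_k}\bigr).
\]
The second term is controlled by the $(m,k)$-alpha-invariant exactly as above (using $\alpha_{m,k}(M)>\alpha$ and integrating $\rho_{\omega,m,V_k}^{-\alpha/m}$ against $\omega_\varphi^n$), producing the $\tfrac{1-\alpha}{\alpha}\fint_M(-\varphi)\,\omega_\varphi^n+C$ contribution. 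For the first term, $\tfrac1m\log(1/\lambda_k)=\tfrac1m\log(\lambda_1/\lambda_k)=\tfrac1m\log(\mu_1(a)/\mu_k(a))$, and here the $k$-th eigenvalue being controlled (with constant $A$ for ratio $1+\epsilon$) gives $\tfrac1m\log(\mu_1/\mu_k)\le(1+\epsilon)I(\psi_a)+A'$; since $I(\psi)=I_2(\psi)$ and $I_2\le I_k$ up to a constant depending on $k,\omega$ (each summand $\fint_M\sqrt{-1}\de\psi\wedge\db\psi\wedge\omega_\psi^r\wedge\omega^{n-r-1}\ge0$, and on a compact manifold these are mutually comparable for the Bergman potentials in question up to additive constants via Proposition~\ref{approximate-T}-type estimates), this term is absorbed into $(1+\epsilon)I_k(\varphi)+C$, giving case~\ref{k-geq-2}.

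\textbf{Main obstacle.} The delicate point is the bookkeeping of which additive constants depend only on the allowed parameters $(n,m,k,\omega,\alpha,a,\delta,\epsilon,A)$; in particular one must check that the alpha-invariant supremum over $\mathcal{G}_k$ is attained with a uniform bound (so that the integrability constant $C'$ above is genuinely uniform in $V_k$), that the density of $\omega_\varphi^n$ relative to $\omega^n$ is controlled in terms of $\delta$ and the continuity equation (to run the Jensen/mean-value step and to pass between integrals against $\omega^n$ and against $\omega_\varphi^n$), and that the comparison $I_2(\psi)\le I_k(\psi)+\text{const}$ for Bergman potentials $\psi$ only costs a constant depending on $(k,\omega)$ and not on $\psi$. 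The reduction from $\varphi$ to $\psi$ via Propositions~\ref{approximate-T} and~\ref{algebraic-approximate-potential} is the structural backbone; everything else is uniformization of constants and the two elementary eigenvalue inequalities $\sum_j\lambda_j|s_j|^2\ge\lambda_1|s_1|^2$ and $\sum_j\lambda_j|s_j|^2\ge\lambda_k\rho_{\omega,m,V_k}$.
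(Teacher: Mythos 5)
The broad architecture — Bergman approximation of $\varphi-\sup_M\varphi$ via Proposition~\ref{algebraic-approximate-potential}, the pointwise inequality $\sum_j\lambda_j|s_j|^2\ge\lambda_k\sum_{j\le k}|s_j|^2$, the alpha-invariant integrability bound, and the eigenvalue-control hypothesis for the $\log(1/\lambda_k)$ term — matches the paper's. But there are two significant divergences, one structural and one a genuine error.

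Structurally, the paper applies Jensen's inequality directly to $e^{\alpha t\sup_M\varphi+(1-\alpha)t\varphi}$ against the probability measure $V^{-1}\omega_\varphi^n$. Combined with the continuity-method identity $e^{t\varphi}\omega_\varphi^n=e^f\omega^n$, this produces $\sup_M\varphi$ on the left and $\fint_M(-\varphi)\,\omega_\varphi^n$ with the exact coefficient $\tfrac{1-\alpha}{\alpha}$ on the right in a single step (after dividing through by $\alpha t$ and using $t\ge\delta$). Your route instead tries to bound $-\inf_M\psi$ and then transfer back to $\sup_M\varphi$; this can in principle be made to work, but it requires the auxiliary observation that the normalization $\int_M e^{t\varphi}\omega_\varphi^n=V$ forces $\inf_M\varphi\le 0$, and it makes the extraction of the precise $\tfrac{1-\alpha}{\alpha}$ coefficient considerably less transparent. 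Your sketch also proposes replacing $\fint_M(-\varphi)\,\omega_\varphi^n$ by a ``$\psi$-counterpart,'' but the algebraic approximation only controls $\varphi-\sup_M\varphi$ in $\mathcal{C}^0$, not $\omega_\varphi^n$ versus $\omega_\psi^n$; the paper simply never makes that replacement.

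The genuine error is in the $k\ge2$ case: you assert $I(\psi)=I_2(\psi)$, and argue $I_2\le I_k$ up to a constant to convert the eigenvalue control into a bound by $I_k$. But by the paper's own definitions $I=I_{n+1}$, not $I_2$, and the chain of inequalities runs $I_2(\psi)\le I_3(\psi)\le\cdots\le I_{n+1}(\psi)=I(\psi)$ (each summand $\fint_M\sqrt{-1}\de\psi\wedge\db\psi\wedge\omega_\psi^r\wedge\omega^{n-r-1}$ is nonnegative and $I_k$ has one more of them than $I_{k-1}$). So $I(\psi)\ge I_k(\psi)$, with a gap that is not bounded by any uniform additive constant; the step ``$(1+\epsilon)I(\psi)\le(1+\epsilon)I_k(\varphi)+C$'' therefore fails. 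The paper uses the eigenvalue-control hypothesis in the form $\tfrac{1}{m}\log(1/\lambda_k)\le A+(1+\epsilon)I_k(\psi)$ directly (that is, the hypothesis is invoked with $I_k$, not with $I$), and then applies Proposition~\ref{approximate-T} to pass from $I_k(\psi)$ to $I_k(\varphi)$ at the cost of a uniform constant. There is no appeal to a comparison between $I$ and $I_k$ at all; introducing one, in the wrong direction, is where your argument breaks.
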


\begin{remark}
If the $k$-th eigenvalue of $K_M^{-m}$ is instead only weakly controlled, then the same arguments establish the weaker conclusion
\[
\sup_M\varphi
\leq\frac{1-\alpha}{\alpha}\fint_M (-\varphi) \ \omega_{\varphi}{}^n
+CI_k(\varphi)+ C.
\]
\end{remark}

\begin{proof}
By Jensen's inequality,
\begin{equation}\label{jensen}
\alpha t\sup_M\varphi
+(1-\alpha)\fint_M t\varphi \ \omega_{\varphi}{}^n
\leq\log\left[\fint_M
e^{\alpha t\sup_M\varphi+(1-\alpha)t\varphi}\omega_\varphi{}^n
\right].
\end{equation}

Let $f$ be the real function on $M$ such that
\[
\begin{cases}
\sqrt{-1}\de\db f=\Ric(\omega)-\omega,\\
\displaystyle\int_Me^f\omega^n=V.
\end{cases}
\]
By the Aubin equation on $\varphi$, and by construction of $f$,
\[
e^{t\varphi}\omega_\varphi{}^n=e^f\omega^n;
\]
hence
\begin{eqnarray}
\int_M
e^{\alpha t\sup_M\varphi+(1-\alpha)t\varphi}\omega_\varphi{}^n
&=&\int_M
e^{\alpha t(\sup_M\varphi-\varphi)+f}\omega^n \notag\\
&\leq&C\int_M
e^{\alpha t(\sup_M\varphi-\varphi)}\omega^n.\label{einstein-eqn}
\end{eqnarray}
Using the  partial $\mathcal{C}^0$-estimate and Proposition \ref{algebraic-approximate-potential},
there exist
\begin{itemize}
\item real numbers $1=\lambda_1\geq \lambda_2\geq\cdots\geq \lambda_N>0$
\item an $h$-orthonormal basis $(s_1, s_2, \ldots s_N)$ of $\sections$
\end{itemize}
such that
\begin{equation}\label{approx}
\sup_M\left\lvert\varphi-\sup_M\varphi
-\tfrac{1}{m}\log\left(\sum_{j=1}^N\lambda_j|s_j|_{h^m}^2\right)\right\rvert\leq C.
\end{equation}
Applying the alpha-invariants criterion to (\ref{approx}),
\begin{eqnarray}
\int_Me^{\alpha t(\sup_M\varphi-\varphi)}\omega^n
&\leq&
e^{\alpha t c}\int_M\left(\sum_{j=1}^N\lambda_j|s_j|_{h^m}^2\right)^{-\alpha t /m} \omega^n\notag\\
&\leq&
e^{\alpha t c}\lambda_k^{-\alpha t/m}\int_M\left(\sum_{j=1}^k|s_j|_{h^m}^2\right)^{-\alpha t /m}\omega^n\notag\\
&\leq&
Ce^{\alpha t c}\lambda_k^{-\alpha t/m}.\label{alg-approx}
\end{eqnarray}
Combining equations (\ref{jensen}), (\ref{einstein-eqn}) and (\ref{alg-approx}),
\[
\alpha t\sup_M\varphi
+(1-\alpha)\fint_M t\varphi \ \omega_{\varphi}{}^n
\leq
C+\alpha tC
+ \frac{\alpha t}{m}\log\left(\frac{1}{\lambda_k}\right).
\]
Rearranging, and using that $t\geq \delta$,
\[
\sup_M\varphi
\leq\frac{1-\alpha}{\alpha}\fint_M (-\varphi) \ \omega_{\varphi}{}^n
+\tfrac{1}{m}\log\left(\frac{1}{\lambda_k}\right)+ C.
\]

Now denote by $\psi$ the algebraic K\"ahler potential
\[
\tfrac{1}{m}\log\left(\sum_{j=1}^N\lambda_j|s_j|_{h^m}^2\right).
\]
\begin{enumerate}
\item If $k=1$, then $\log\left(\frac{1}{\lambda_k}\right)=0$ and we are done.
\item If $k\geq 2$, then since the $k$-th eigenvalue of $K_M^{-m}$ is controlled,  with constant $A$ for the ratio $1+\epsilon$,
\[
\tfrac{1}{m}\log\left(\frac{1}{\lambda_{k}}\right)
\leq A+ (1+\epsilon)I_k(\psi),
\]
and by (\ref{approx}) and Proposition \ref{approximate-T}, $I_k(\psi)\leq C+I_k(\varphi)$.  This concludes the proof.
\end{enumerate}
\end{proof}

\section{$\mathcal{C}^0$ estimates from higher alpha invariants}

In this section we maintain the notation of Section \ref{Einst-alg-approx-sect}, and fix a natural number $k$, with $2\leq k\leq n$.

\subsection{Two standard functionals}

We recall the functionals $I$ and $J$ introduced by Aubin:
\begin{eqnarray*}
I(\varphi)&:=& \sum_{r=0}^{n-1} \fint_M \sqrt{-1}\de\varphi\wedge\db\varphi\wedge\omega_{\varphi}{}^r\wedge\omega^{n-r-1};\\
J(\varphi)&:=& \sum_{r=0}^{n-1}\frac{n-r}{n+1} \fint_M \sqrt{-1}\de\varphi\wedge\db\varphi\wedge\omega_{\varphi}{}^r\wedge\omega^{n-r-1}.
\end{eqnarray*}

By construction $I(\varphi)=I_{n+1}(\varphi)$, and for any integer $k$ such that $2\leq k\leq n$,
\begin{eqnarray}
(n+1)J(\varphi)-I(\varphi)
&=& \sum_{r=0}^{n-2}(n-r-1) \fint_M \sqrt{-1}\de\varphi\wedge\db\varphi\wedge\omega_{\varphi}{}^r\wedge\omega^{n-r-1}\notag\\
&\geq&(n-k+1)I_k(\varphi).\label{H-I-J}
\end{eqnarray}

Also by construction, for any K\"ahler potential $\varphi$,
\begin{equation} \label{I-J-diff-pos}
I(\varphi)-J(\varphi)
=\sum_{r=0}^{n-1}\frac{r+1}{n+1} \fint_M \sqrt{-1}\de\varphi\wedge\db\varphi\wedge\omega_{\varphi}{}^r\wedge\omega^{n-r-1}
\geq 0,
\end{equation}
and by Stokes' theorem
\begin{equation}\label{I-as-diff}
I(\varphi)=- \sum_{r=0}^{n-1} \fint_M \varphi\sqrt{-1}\de\db\varphi\wedge\omega_{\varphi}{}^r\wedge\omega^{n-r-1}
=  \fint_M \varphi(\omega^n-\omega_\varphi{}^n).
\end{equation}
%\begin{eqnarray}
%I(\varphi)&=&- \sum_{r=0}^{n-1} \fint_M \varphi\sqrt{-1}\de\db\varphi\wedge\omega_{\varphi}{}^r\wedge\omega^{n-r-1}\notag\\
%&=&  \fint_M \varphi(\omega^n-\omega_\varphi{}^n).\label{I-as-diff}
%\end{eqnarray}

\subsection{Aubin continuity method setup}

Throughout the rest of the paper, we study Aubin's system for a K\"ahler potential $\varphi$:
\begin{equation*} \tag{$*_t$}
\begin{cases}
\Ric(\omega_{\varphi})=t\omega_{\varphi}+(1-t)\omega,\\
\displaystyle\int_Me^{t\varphi}\omega_{\varphi}{}^n=V,
\end{cases}
\end{equation*}
\noindent Let $\mathcal{I}$ be the subset of $t\in[0,1]$ for which $(*_t)$ has a solution.

By an implicit-function-theorem argument \cite{aub84}, $\mathcal{I}$ is open in $[0,1]$ and contains $0$; moreover, there exists a  1-parameter family of solutions $(\varphi_t)_{t\in \mathcal{I}\cap [0,1)}$ which is differentiable in $t$.

\begin{remark}
If $M$ has no holomorphic vector fields, then the linearization of the relevant operator is invertible also at $t=1$, and so the implicit-function-theorem argument produces a 1-parameter family of solutions $(\varphi_t)_{t\in \mathcal{I}}$ which is differentiable in $t$; however, we will not need that here.
\end{remark}

\subsection{An opposite estimate}

The following identity for the family $(\varphi_t)$ essentially appears in the proof of Proposition 2.3 in \cite{tia87}; see also the lecture notes \cite{tia97} (Proposition 4.3).

\begin{proposition} For all $t\in \mathcal{I}\cap [0,1)$,
\[
-\frac{1}{t}\int_0^1[I(\varphi_s)-J(\varphi_s)]ds
=J(\varphi_t)-\fint_M\varphi_t\omega^n.
\]
\end{proposition}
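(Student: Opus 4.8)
The plan is to differentiate both sides in $t$ and check equality of derivatives together with matching at $t=0$ (or at the appropriate endpoint behavior as $t\to 0^+$). The key input is the known variational formula for how $I$, $J$, and $\fint_M\varphi_t\,\omega^n$ evolve along the Aubin path $(\varphi_t)$, which comes from differentiating the continuity equation $(*_t)$.

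First I would record the relevant derivative formulas. Writing $\dot\varphi = \frac{d}{dt}\varphi_t$, differentiating the equation $\Ric(\omega_{\varphi_t}) = t\omega_{\varphi_t} + (1-t)\omega$ gives $-\sqrt{-1}\de\db\dot\varphi\cdot(\text{something}) = \omega_{\varphi_t} - \omega + t\sqrt{-1}\de\db\dot\varphi$, equivalently $\Delta_{\omega_{\varphi_t}}\dot\varphi$ is controlled in terms of $\varphi_t$; combined with the normalization $\int_M e^{t\varphi_t}\omega_{\varphi_t}{}^n = V$ (differentiated in $t$) this pins down $\dot\varphi$ and yields the classical identities
\[
\frac{d}{dt}\big[I(\varphi_t) - J(\varphi_t)\big] = -\frac{1}{t}\big[I(\varphi_t)-J(\varphi_t)\big] + (\text{terms involving } \tfrac{d}{dt}) ,
\]
together with $\frac{d}{dt}\big(J(\varphi_t) - \fint_M\varphi_t\,\omega^n\big)$ expressed through the same quantities. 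These are exactly the computations in the proof of Proposition 2.3 of \cite{tia87}; I would cite or reproduce them. The cleanest route is: set $F(t) := t\big(J(\varphi_t) - \fint_M\varphi_t\,\omega^n\big)$ and show $F'(t) = -[I(\varphi_t) - J(\varphi_t)]$, which integrates (using $F(0)=0$, since $\varphi_0 = 0$) to $F(t) = -\int_0^t [I(\varphi_s)-J(\varphi_s)]\,ds$; dividing by $t$ and noting the claimed right-hand side has $\int_0^1$ rather than $\int_0^t$ — wait, re-examining, the stated identity has $\int_0^1$, so one must instead use the homogeneity/rescaling structure of Aubin's path or reinterpret: I would verify whether the intended integral is along the reparametrized family, and adjust the endpoint accordingly, so the constant of integration is fixed by the $t\to 1$ or $t\to 0$ behavior.

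The main obstacle I anticipate is handling the derivative of $\fint_M\varphi_t\,\omega^n$ and of the mixed Monge–Amp\`ere terms cleanly: differentiating $\fint_M\varphi_t(\omega^n - \omega_{\varphi_t}{}^n)$ and the $I-J$ integrand produces several boundary-free integrals after Stokes, but getting the coefficient $-\tfrac1t$ to appear requires using the second equation of $(*_t)$ to evaluate $\fint_M \dot\varphi_t\, e^{t\varphi_t}\omega_{\varphi_t}{}^n$ in terms of $\fint_M \varphi_t\,e^{t\varphi_t}\omega_{\varphi_t}{}^n$, and then converting back via $e^{t\varphi_t}\omega_{\varphi_t}{}^n = e^f\omega^n$ as in the proof of Proposition~\ref{Einst-alg-approx}. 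Once that bookkeeping is done, the identity follows by integrating the resulting first-order linear ODE in $t$ with the initial condition $\varphi_0 = 0$ (so all functionals vanish at $t=0$), which forces the integration constant and yields the stated closed form.
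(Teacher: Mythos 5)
The paper gives no proof of this proposition; it simply points the reader to the proof of Proposition 2.3 in \cite{tia87} and Proposition 4.3 of \cite{tia97}. Your approach is precisely that standard one, and it is correct: define $F(t):=t\bigl(J(\varphi_t)-\fint_M\varphi_t\,\omega^n\bigr)$, use the linearized continuity equation $\Delta_{\varphi_t}\dot\varphi_t+t\dot\varphi_t=-\varphi_t$ together with the variational formula $\frac{d}{dt}\bigl[J(\varphi_t)-\fint_M\varphi_t\,\omega^n\bigr]=-\fint_M\dot\varphi_t\,\omega_{\varphi_t}^n$ and the identity $I(\varphi)=\fint_M\varphi(\omega^n-\omega_\varphi^n)$ to obtain $F'(t)=-[I(\varphi_t)-J(\varphi_t)]$, and integrate from $0$ to $t$ with $F(0)=0$.

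The one place where you should be more decisive is the $\int_0^1$ versus $\int_0^t$ discrepancy. Your computation correctly produces $\int_0^t$, and rather than searching for a reparametrization that would rescue the displayed $\int_0^1$, you should flag it as a typo in the paper. The statement as printed cannot be right: the left-hand side would then be $\tfrac{1}{t}$ times a $t$-independent constant (indeed $\varphi_s$ need not even be defined for $s$ near $1$, since $[0,1]\subseteq\mathcal{I}$ is not known a priori), while the right-hand side depends genuinely on $t$; the prefactor $\tfrac{1}{t}$ is exactly absorbing the factor of $t$ in your $F(t)$, which only makes sense with the upper limit $t$. Downstream, Corollary~\ref{I-J-ineq} only uses the sign $J(\varphi_t)-\fint_M\varphi_t\,\omega^n\leq 0$, which follows from either version since $I-J\geq 0$, so the typo is harmless to the rest of the argument.

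A minor nit: your first-pass differentiation of $\Ric(\omega_{\varphi_t})=t\omega_{\varphi_t}+(1-t)\omega$ writes the left side as "$-\sqrt{-1}\de\db\dot\varphi\cdot(\text{something})$", which has the wrong form; the derivative of the Ricci form is $-\sqrt{-1}\de\db(\Delta_{\varphi_t}\dot\varphi_t)$, a composition rather than a product, and comparing with $\sqrt{-1}\de\db(\varphi_t+t\dot\varphi_t)$ and fixing the constant by the normalization gives $\Delta_{\varphi_t}\dot\varphi_t=-\varphi_t-t\dot\varphi_t$. Since you then pivot to the clean $F(t)$ bookkeeping, this does not affect the correctness of the proposal.
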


\begin{corollary} \label{I-J-ineq} For all $t\in \mathcal{I}\cap [0,1)$,  for any integer $k$ such that $2\leq k\leq n$,
\[
\fint_M (-\varphi_t) \ \omega_{\varphi_t}{}^n
%+\sum_{r=0}^{n-2}(n-1-r) \ T_r(\varphi)
+(n-k+1)I_k(\varphi)
\leq n \ \sup_M\varphi_t.
\]
\end{corollary}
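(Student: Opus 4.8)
The plan is to unfold the left-hand side in terms of the Aubin functionals $I$ and $J$, use the preceding proposition to bound $J(\varphi_t)$ by $\fint_M\varphi_t\,\omega^n$, and then estimate this crudely by $\sup_M\varphi_t$. (I note the statement should read $I_k(\varphi_t)$ rather than $I_k(\varphi)$; I work with $\varphi_t$.) The first step is to rewrite the term $\fint_M(-\varphi_t)\,\omega_{\varphi_t}{}^n$ using the second expression for $I$ in (\ref{I-as-diff}), $I(\varphi_t)=\fint_M\varphi_t(\omega^n-\omega_{\varphi_t}{}^n)$, which rearranges to
\[
\fint_M(-\varphi_t)\,\omega_{\varphi_t}{}^n = I(\varphi_t)-\fint_M\varphi_t\,\omega^n.
\]

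Next I would invoke (\ref{H-I-J}) in the form $(n-k+1)I_k(\varphi_t)\le (n+1)J(\varphi_t)-I(\varphi_t)$. Adding this to the identity just displayed, the two occurrences of $I(\varphi_t)$ cancel, and the left-hand side of the claimed inequality is bounded above by $(n+1)J(\varphi_t)-\fint_M\varphi_t\,\omega^n$. It then remains to control $J(\varphi_t)$. Here is where the preceding proposition enters: since $I(\varphi_s)-J(\varphi_s)\ge 0$ for every $s$ by (\ref{I-J-diff-pos}) and $t>0$, the proposition gives $J(\varphi_t)-\fint_M\varphi_t\,\omega^n\le 0$, i.e.\ $J(\varphi_t)\le\fint_M\varphi_t\,\omega^n$. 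Plugging this in,
\[
\fint_M(-\varphi_t)\,\omega_{\varphi_t}{}^n+(n-k+1)I_k(\varphi_t)\le (n+1)\fint_M\varphi_t\,\omega^n-\fint_M\varphi_t\,\omega^n = n\fint_M\varphi_t\,\omega^n,
\]
and $\fint_M\varphi_t\,\omega^n\le\sup_M\varphi_t$ finishes the argument.

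I do not expect a genuine obstacle: the proof is bookkeeping once (\ref{I-as-diff}), (\ref{H-I-J}), (\ref{I-J-diff-pos}) and the preceding proposition are assembled in the right order, and the factor $n$ on the right comes out automatically from $(n+1)J-\fint_M\varphi_t\,\omega^n\le n\fint_M\varphi_t\,\omega^n$. The one point worth flagging is that the chain $J(\varphi_t)\le\fint_M\varphi_t\,\omega^n\le\sup_M\varphi_t$ genuinely uses the particular normalization of the family $(\varphi_t)$ fixed by the second equation of $(*_t)$ (through the preceding proposition): since $J$ is unchanged when a constant is added to its argument while $\sup_M$ is not, no inequality of this shape can hold for a general K\"ahler potential, so the proof cannot be purely formal.
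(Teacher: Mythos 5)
Your proof is correct and is essentially identical to the paper's: both invoke (\ref{I-as-diff}), (\ref{H-I-J}), (\ref{I-J-diff-pos}), the preceding proposition, and the bound $\fint_M\varphi_t\,\omega^n\leq\sup_M\varphi_t$, with the algebra arranged in a slightly different (arguably cleaner) order. Your concluding remark about the normalization is a good sanity check but not needed beyond what the proposition already encodes.
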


\begin{proof}[Proof of Corollary \ref{I-J-ineq}]
By (\ref{I-J-diff-pos}), $I(\varphi_s)-J(\varphi_s)\geq 0$ for all $s$, so
\begin{eqnarray*}
0&\geq &J(\varphi_t)-\fint_M\varphi_t\omega^n\\
&=&J(\varphi_t)-\frac{n}{n+1}\fint_M\varphi_t\omega^n - \frac{1}{n+1}\left[I(\varphi_t)
+\fint_M\varphi_t\omega_{\varphi_t}{}^n\right].
\end{eqnarray*}
(The second line is an application of (\ref{I-as-diff}).)  Multiplying through by $n+1$ and rearranging,
\[
n\fint_M\varphi_t\omega^n
\geq \left[(n+1)J(\varphi_t)-I(\varphi_t)\right]+\fint_M (-\varphi_t) \ \omega_{\varphi_t}{}^n.
\]
Now apply (\ref{H-I-J}) (on the right-hand side) and the estimate $\sup_M\varphi_t \geq \fint_M\varphi_t\omega^n$ (on the left-hand side).
\end{proof}

\subsection{$\mathcal{C}^0$ estimate}

\begin{theorem} \label{c0-estimate}
Suppose that
\begin{enumerate}
\item (partial $\mathcal{C}^0$-estimate) there exists $a>0$,
such that for all $t\in \mathcal{I}\cap [\delta,1)$, $ \rho_{\omega_{\varphi_t},m}\geq a$;
\item (alpha-invariants criterion)
\begin{eqnarray*}
 \frac{n}{n+1}&<&\alpha_{m,k}(M),\\
 (k-1)\left[\frac{1}{\alpha_{m,1}(M)}-\frac{n+1}{n}\right]
 &<&(n-k+1)\left[\frac{n+1}{n}-\frac{1}{\alpha_{m,k}(M)}\right];
\end{eqnarray*}
\item the $k$-th eigenvalue of $K_M^{-m}$ is controlled.
\end{enumerate}
Then there exists a constant $C$,
such that for all $t\in \mathcal{I}\cap [\delta,1)$,
\[
\sup_M\varphi_t\leq C.
\]
\end{theorem}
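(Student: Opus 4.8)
The plan is to combine the two opposing estimates we have accumulated---Proposition~\ref{Einst-alg-approx} (the upper bound on $\sup_M\varphi_t$ in terms of the Aubin functionals) and Corollary~\ref{I-J-ineq} (the lower bound $n\sup_M\varphi_t \geq \fint_M(-\varphi_t)\omega_{\varphi_t}{}^n + (n-k+1)I_k(\varphi_t)$)---into a single self-improving inequality for $\sup_M\varphi_t$, and then show that the numerical hypothesis~(2) forces the coefficient of $\sup_M\varphi_t$ that emerges on the right-hand side to be strictly less than $1$, so that the $\sup$ term can be absorbed. First I would apply Proposition~\ref{Einst-alg-approx} with $\alpha = \alpha_{m,1}(M)$ for the $k=1$ conclusion and with $\alpha = \alpha_{m,k}(M)$ for the $k\geq 2$ conclusion; hypotheses~(1)--(3) of the present theorem supply exactly the inputs (partial $\mathcal C^0$-estimate, $\alpha_{m,k}(M)>\alpha$, control of the $k$-th eigenvalue) that Proposition~\ref{Einst-alg-approx} requires, with $\delta$ playing its role and $\epsilon, A$ chosen to match the definition of ``controlled.'' Using the $k=1$ instance gives $\sup_M\varphi_t \leq \frac{1-\alpha_{m,1}}{\alpha_{m,1}}\fint_M(-\varphi_t)\omega_{\varphi_t}{}^n + C$, and using the $k\geq 2$ instance gives the same with an extra $(1+\epsilon)I_k(\varphi_t)$ term and with $\alpha_{m,k}$ in place of $\alpha_{m,1}$.

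Next, the idea is to feed Corollary~\ref{I-J-ineq} into these. Corollary~\ref{I-J-ineq} lets me write $\fint_M(-\varphi_t)\omega_{\varphi_t}{}^n \leq n\sup_M\varphi_t - (n-k+1)I_k(\varphi_t)$, and also trivially $I_k(\varphi_t) \leq \frac{1}{n-k+1}\bigl(n\sup_M\varphi_t - \fint_M(-\varphi_t)\omega_{\varphi_t}{}^n\bigr) \leq \frac{n}{n-k+1}\sup_M\varphi_t$ (since $\fint_M(-\varphi_t)\omega_{\varphi_t}{}^n \geq 0$ is not automatic, but $I_k \geq 0$ and the corollary bound can be combined). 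More carefully: I will take a convex combination. Write $x = \sup_M\varphi_t$, $y = \fint_M(-\varphi_t)\omega_{\varphi_t}{}^n \geq 0$, $z = I_k(\varphi_t) \geq 0$. The $k\geq 2$ conclusion of Proposition~\ref{Einst-alg-approx} reads $x \leq \frac{1-\alpha_{m,k}}{\alpha_{m,k}}y + (1+\epsilon)z + C$; the $k=1$ conclusion reads $x \leq \frac{1-\alpha_{m,1}}{\alpha_{m,1}}y + C$; and Corollary~\ref{I-J-ineq} reads $y + (n-k+1)z \leq nx$. Taking a suitable weighted average $\theta$ of the two Proposition inequalities (weight $\theta$ on the $k\geq 2$ one, $1-\theta$ on the $k=1$ one) and then substituting $y = nx - (n-k+1)z - (\text{nonneg slack})$ from the corollary, the coefficient of $x$ on the right becomes $n\bigl[\theta\tfrac{1-\alpha_{m,k}}{\alpha_{m,k}} + (1-\theta)\tfrac{1-\alpha_{m,1}}{\alpha_{m,1}}\bigr]$ up to the $z$-terms, and the $z$-coefficient is $\theta(1+\epsilon) - (n-k+1)\bigl[\theta\tfrac{1-\alpha_{m,k}}{\alpha_{m,k}} + (1-\theta)\tfrac{1-\alpha_{m,1}}{\alpha_{m,1}}\bigr]$. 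I choose $\theta$ and $\epsilon$ so that the $z$-coefficient is $\leq 0$ (so the nonnegative $z$ can be dropped) while simultaneously the resulting $x$-coefficient is $<1$; the second displayed inequality in hypothesis~(2), which rearranges to $(k-1)\tfrac{1-\alpha_{m,1}}{\alpha_{m,1}} + (n-k+1)\tfrac{1-\alpha_{m,k}}{\alpha_{m,k}} < \tfrac{n}{n+1}\cdot n \cdot(\text{something})$---more precisely to the statement that a certain convex combination of $\tfrac{1}{\alpha_{m,1}}$ and $\tfrac{1}{\alpha_{m,k}}$ is $<\tfrac{n+1}{n}$---is exactly what makes such a choice possible, with $\theta = \tfrac{n-k+1}{n}$ the natural candidate coming from balancing against the $(n-k+1)$ weight in the corollary. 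Once the $x$-coefficient is some $\gamma < 1$, we get $x \leq \gamma x + C$, hence $\sup_M\varphi_t = x \leq \frac{C}{1-\gamma}$, uniformly in $t \in \mathcal I \cap [\delta,1)$, which is the claim.

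The main obstacle, and the step that needs genuine care rather than bookkeeping, is the verification that hypothesis~(2) delivers a choice of $\theta \in [0,1]$ and $\epsilon > 0$ (the latter then fed back, via the definition of ``controlled,'' into the constant $A$ and hence $C$ of Proposition~\ref{Einst-alg-approx}) for which the $z$-coefficient is nonpositive \emph{and} $\gamma < 1$ strictly. This is a linear-programming feasibility check in the two parameters, and the two inequalities in~(2) are precisely the two facets of the feasible region: the first, $\alpha_{m,k}(M) > \tfrac{n}{n+1}$, controls the endpoint $\theta = 1$ (pure $k\geq 2$, $\epsilon \to 0$) giving $x$-coefficient $n\tfrac{1-\alpha_{m,k}}{\alpha_{m,k}} < 1$ after absorbing a nonpositive $z$-term (one checks $n\tfrac{1-\alpha_{m,k}}{\alpha_{m,k}} < 1 \iff \alpha_{m,k} > \tfrac{n}{n+1}$), while the second inequality is the constraint coupling $\alpha_{m,1}$ and $\alpha_{m,k}$ needed when $\theta < 1$ is forced by the requirement that the $z$-coefficient stay $\leq 0$; since $\alpha_{m,1}(M) \leq \alpha_{m,k}(M)$ in general, it is the second inequality that is the binding one, and I would present the arithmetic of extracting the admissible $(\theta,\epsilon)$ from it as the technical heart of the argument. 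One should also note that $\epsilon>0$ can be taken as small as this optimization requires \emph{before} invoking Proposition~\ref{Einst-alg-approx}, since ``the $k$-th eigenvalue is controlled'' is exactly the statement that for every $\epsilon>0$ a corresponding constant $A$ exists; and the case $k = 1$ does not arise here since the theorem assumes $k \geq 2$, so the $k=1$ conclusion of Proposition~\ref{Einst-alg-approx} is used only as the $\alpha = \alpha_{m,1}$, no-$I_k$-term bound that anchors the $\theta = 0$ end of the combination.
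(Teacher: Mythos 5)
Your overall strategy is the right one and matches the paper's: combine Proposition~\ref{Einst-alg-approx} (both the $k=1$ and $k\geq 2$ conclusions) with Corollary~\ref{I-J-ineq} via a nonnegative linear combination chosen so that the $I_k$ and $\fint(-\varphi)\omega_\varphi^n$ terms can be discarded, leaving a self-improving bound on $\sup_M\varphi_t$. However, the ``linear-programming feasibility check'' that you correctly flag as the technical heart is not carried out, and the guess you offer for it does not survive the arithmetic. With the notation $A_1 = \tfrac{1}{\alpha_{m,1}}-1$, $A_k = \tfrac{1}{\alpha_{m,k}}-1$, the second alpha-invariants hypothesis rearranges to $(k-1)A_1 + (n-k+1)A_k < 1$. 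At your proposed $\theta = \tfrac{n-k+1}{n}$, the $z$-coefficient after substitution is
\[
\frac{n-k+1}{n}\Bigl[(1+\epsilon) - (k-1)A_1 - (n-k+1)A_k\Bigr],
\]
which is \emph{strictly positive} precisely because of that hypothesis (the bracket exceeds $\epsilon>0$), not nonpositive as you need. So the $\theta=\tfrac{n-k+1}{n}$ candidate goes in the wrong direction, and the constraint you were hoping it would trigger is actually the obstruction.

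The admissible $\theta$ is not a fixed function of $n$ and $k$; it must depend on the alpha-invariants themselves. The paper's choice (after normalizing so the weights on (A) and (B) sum to one) is $\theta = \dfrac{(n-k+1)\beta_1}{\Lambda + (n-k+1)\beta_1 - (n-k+1)\beta_k}$ with $\beta_1 > A_1$, $\beta_k > A_k$, $\Lambda = 1+\epsilon$, which makes the $z$-coefficient vanish \emph{exactly} rather than merely nonpositive; the $x$-coefficient is then $< 1$ if and only if $(n[\Lambda - 1]+k-1)(\beta_1 - \tfrac{1}{n}) < (n-k+1)(\tfrac{1}{n}-\beta_k)$, which the alpha-invariants hypothesis does allow (for $\beta_1,\beta_k$ close enough to $A_1,A_k$ and $\Lambda$ close enough to $1$). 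There is also a smaller technical slip: Proposition~\ref{Einst-alg-approx} requires $\alpha_{m,k}(M) > \alpha$ strictly, so you cannot literally plug in $\alpha = \alpha_{m,1}(M)$ or $\alpha = \alpha_{m,k}(M)$; you must take $\alpha$ slightly smaller (equivalently, $\beta$ slightly larger than $A$). Taken together: correct architecture, but the one step you yourself name as the technical heart is missing, and the candidate weights you propose for it are wrong.
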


\begin{remark}
The second part of the alpha-invariants criterion may be rearranged attractively as
\[
\frac{k-1}{\alpha_{m,1}(M)}+\frac{n-k+1}{\alpha_{m,k}(M)}
 < n+1.
\]
\end{remark}

\begin{proof}  Throughout this proof write $\varphi$ for a potential $\varphi_t$ satisfying the hypotheses of the theorem.
By Corollary \ref{I-J-ineq},
\begin{equation}\label{C}
\fint_M (-\varphi) \ \omega_{\varphi}{}^n
+(n-k+1)I_k(\varphi)
\leq n  \sup_M\varphi.
\end{equation}

Let $\beta_1\geq 0$ and $\beta_k>-1$ be real numbers, to be determined subsequently, with
\[
\beta_1>\frac{1-\alpha_{m,1}(M)}{\alpha_{m,1}(M)},
\qquad\qquad\frac{1}{n}>\beta_k>\frac{1-\alpha_{m,k}(M)}{\alpha_{m,k}(M)}.
\]
Thus
\[
\frac{1-\tfrac{1}{1+\beta_1}}{\tfrac{1}{1+\beta_1}}=\beta_1,
\qquad \frac{1-\tfrac{1}{1+\beta_k}}{\tfrac{1}{1+\beta_k}}=\beta_k,
\qquad 0<\frac{1}{1+\beta_1}<\alpha_{m,1}(M),
\qquad \frac{n}{n+1}<\frac{1}{1+\beta_k}<\alpha_{m,k}(M).
\]
Applying Proposition \ref{Einst-alg-approx} (\ref{k-1})  with $\alpha=1/(1+\beta_1)$,  we obtain:
\begin{equation}\label{A}
\sup_M\varphi
\leq\beta_1\fint_M (-\varphi) \ \omega_{\varphi}{}^n
+ C;
\end{equation}
applying Proposition \ref{Einst-alg-approx} (\ref{k-geq-2}) with $\alpha=1/(1+\beta_k)$, and with a positive real number $\Lambda>1$ (the $(1+\epsilon)$ of that theorem) to be determined subsequently,  we obtain:
\begin{equation}\label{B}
\sup_M\varphi
\leq\beta_k\fint_M (-\varphi) \ \omega_{\varphi}{}^n
+\Lambda I_k(\varphi)+ C.
\end{equation}

Consider the following nonnegative linear combination of the inequalities (\ref{C}), (\ref{A}), (\ref{B}):
\begin{equation}\label{lincomb}
\beta_1\Lambda(\ref{C})
+\left[\Lambda-(n-k+1)\beta_k\right](\ref{A})
+(n-k+1)\beta_1(\ref{B}).
\end{equation}
(Since $\Lambda>1>\frac{n-k+1}{n}>(n-k+1)\beta_k$, and $\beta_1\geq 0$, the coefficients of (\ref{C}), (\ref{A}), (\ref{B}) are indeed nonnegative.)

After moving everything to the left-hand side, the coefficient of $I_k(\varphi)$ in inequality (\ref{lincomb}) is:
\[
\beta_1\Lambda\cdot (n-k+1)
+ (n-k+1)\beta_1\cdot -\Lambda
=0.
\]
The coefficient of $\fint_M (-\varphi) \ \omega_{\varphi}{}^n$ in inequality (\ref{lincomb}) is:
\[
\beta_1\Lambda\cdot 1
+\left[\Lambda-(n-k+1)\beta_k\right]\cdot -\beta_1
+(n-k+1)\beta_1\cdot -\beta_k=0.
\]
The coefficient of $\sup_M\varphi$ in inequality (\ref{lincomb}) is:
\begin{eqnarray*}
&&\beta_1\Lambda\cdot -n
+\left[\Lambda-(n-k+1)\beta_k\right]\cdot 1
+(n-k+1)\beta_1\cdot 1\\
&=&(n\Lambda-n+k-1)\left(\tfrac{1}{n}-\beta_1\right)
+(n-k+1)\left(\tfrac{1}{n}-\beta_k\right).
\end{eqnarray*}

We now make a specific choice of $\beta_1$, $\beta_k$ and $\Lambda$.  Rearranging the alpha-invariants criterion gives
\[
 (k-1)\left[\frac{1-\alpha_{m,1}(M)}{\alpha_{m,1}(M)}-\frac{1}{n}\right]
 <(n-k+1)\left[\frac{1}{n}-\frac{1-\alpha_{m,k}(M)}{\alpha_{m,k}(M)}\right].
\]
Since this holds, we may choose $\beta_1$, $\beta_k$ and $\Lambda=1+\epsilon$ satisfying the conditions stated previously, and such that moreover
\[
 (n[\Lambda-1]+k-1)\left[\beta_1-\frac{1}{n}\right]
 <(n-k+1)\left[\frac{1}{n}-\beta_k\right].
\]

The coefficient of $\sup_M\varphi$  in the inequality (\ref{lincomb}) is then positive, and the inequality simplifies to the uniform bound
\[
\sup_M\varphi\leq
\left\{(n-k+1)\left[\frac{1}{n}-\beta_k\right]
-(n[\Lambda-1]+k-1)\left[\beta_1-\frac{1}{n}\right]
 \right\}^{-1}C.
\]

\end{proof}

\subsection{Existence of K\"ahler-Einstein metrics}

\begin{corollary} \label{general-existence}
Suppose that
\begin{enumerate}
\item (partial $\mathcal{C}^0$-estimate) there exists $a>0$,
such that for all $t\in \mathcal{I}\cap [\delta,1)$, $ \rho_{\omega_{\varphi_t},m}\geq a$;
\item (alpha-invariants criterion)
\begin{eqnarray*}
 \frac{n}{n+1}&<&\alpha_{m,k}(M),\\
 (k-1)\left[\frac{1}{\alpha_{m,1}(M)}-\frac{n+1}{n}\right]
 &<&(n-k+1)\left[\frac{n+1}{n}-\frac{1}{\alpha_{m,k}(M)}\right];
\end{eqnarray*}
\item the $k$-th eigenvalue of $K_M^{-m}$ is controlled.
\end{enumerate}
Then $M$ admits a K\"ahler-Einstein metric.
\end{corollary}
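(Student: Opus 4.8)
The plan is to run the Aubin continuity method to $t=1$. Recall that $\mathcal{I}$ is open in $[0,1]$ and contains $0$; it therefore suffices to show $\mathcal{I}$ is closed, for then $\mathcal{I}=[0,1]$, and any solution $\varphi_1$ of $(*_1)$ produces a K\"ahler-Einstein metric $\omega_{\varphi_1}$ on $M$. Closedness will follow from uniform $C^\infty$ a priori estimates on the solutions $\varphi_t$, $t\in\mathcal{I}\cap[0,1)$.

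The essential input is the $C^0$ estimate, and it is precisely what has already been established: the hypotheses of this corollary are verbatim those of Theorem \ref{c0-estimate}, so there is a constant $C$ with $\sup_M\varphi_t\leq C$ for all $t\in\mathcal{I}\cap[\delta,1)$, and for $t$ near $0$ the analogous bound is classical. I would then feed this into the standard machinery of the continuity method. First, a uniform bound on $\sup_M\varphi_t$ upgrades to a uniform bound on the oscillation $\sup_M\varphi_t-\inf_M\varphi_t$: this is the familiar principle that, in the Fano case, control of $\sup_M\varphi_t$ is the only genuine obstruction to the $C^0$ estimate, the complementary inequality following from $(*_t)$ in its Monge-Amp\`ere form together with a Green's-function estimate and the positivity of the $\alpha$-invariant of $M$ (cf.\ \cite{aub84, tia87, tos12}). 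Given the $C^0$ bound, Yau's Laplacian estimate, the complex Evans-Krylov estimate, Schauder, and bootstrapping yield uniform bounds $\|\varphi_t\|_{C^\ell}\leq C_\ell$ for every $\ell$.

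Closedness is then routine. If $t_j\in\mathcal{I}$ and $t_j\to t_\infty$ with $t_\infty>0$, the $\varphi_{t_j}$ lie (for $j$ large) in a precompact subset of $C^\infty(M)$ --- by Theorem \ref{c0-estimate} and the higher-order estimates when $t_\infty\geq\delta$, by the classical small-$t$ estimates when $0<t_\infty<\delta$ --- so a subsequence converges smoothly to a solution of $(*_{t_\infty})$, whence $t_\infty\in\mathcal{I}$; the case $t_\infty=0$ is covered by $0\in\mathcal{I}$. Being open, closed, non-empty and contained in the connected interval $[0,1]$, $\mathcal{I}$ equals $[0,1]$; in particular $1\in\mathcal{I}$, which is the assertion.

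All of the real content of the argument is concentrated in Theorem \ref{c0-estimate} and, beneath it, Proposition \ref{Einst-alg-approx}; for this corollary those may be treated as black boxes, so there is no substantial obstacle here, only the task of quoting the classical reductions correctly. The one point deserving a little care is the bookkeeping around the parameter $\delta$ --- making sure the range $t\in[0,\delta)$, to which Theorem \ref{c0-estimate} does not apply, is covered by the classical small-$t$ theory of the continuity method, so that uniform estimates hold on all of $\mathcal{I}\cap[0,1)$ --- together with citing the precise "one-sided implies two-sided'' form of the $C^0$ estimate that turns $\sup_M\varphi_t\leq C$ into a bound on the oscillation.
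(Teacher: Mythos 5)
Your proposal is correct and takes essentially the same approach as the paper: apply Theorem \ref{c0-estimate} to get the $\mathcal{C}^0$ bound, bootstrap to higher-order a priori estimates via the classical Aubin--Yau machinery, conclude closedness of $\mathcal{I}$, and combine with openness and $0\in\mathcal{I}$. The paper's proof is terser (it cites Aubin and Yau for the $\mathcal{C}^0\Rightarrow\mathcal{C}^{2,\gamma}$ upgrade and disposes of the range $[0,\delta]$ by choosing $\delta$ small enough that $[0,\delta]\subseteq\mathcal{I}$), but the content is the same.
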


\begin{proof}
Choose $\delta$ sufficiently small that $[0,\delta]\subseteq \mathcal{I}$ is nonempty.  By arguments due to Aubin \cite{Aub76} and Yau \cite{yau77, yau78},  the a priori $\mathcal{C}^0$-bound of Theorem \ref{c0-estimate} implies an a priori $\mathcal{C}^{2,\gamma}$ bound.  Hence the set $(\varphi_t)_{t\in\mathcal{I}\cap[\delta,1)}$ is precompact in the $\mathcal{C}^2$ topology, so $\mathcal{I}$ is closed in $[0,1]$.    We already knew $\mathcal{I}$ was open in $[0,1]$ and contained 0; thus it contains 1.
\end{proof}

Combining Corollary \ref{general-existence} with Szekelyhidi's partial $\mathcal{C}^0$-estimate Theorem \ref{partial-c0} yields Theorem \ref{main-thm}.

\begin{remark}
If the $k$-th eigenvalue of $K_M^{-m}$ is instead only weakly controlled, then the same arguments show that there exists an $\epsilon>0$, not particularly computable or nice, such that if
\begin{eqnarray*}
 \alpha_{m,k}(M)&>&\frac{n}{n+1},\\
 \alpha_{m,1}(M)&>&\frac{n}{n+1}-\epsilon,
\end{eqnarray*}
then $M$ admits a K\"ahler-Einstein metric.  In particular if
\begin{eqnarray*}
 \alpha_{m,k}(M)&>&\frac{n}{n+1},\\
 \alpha_{m,1}(M)&=&\frac{n}{n+1},
\end{eqnarray*}
then $M$ admits a K\"ahler-Einstein metric.

Specifically, if the $k$-th eigenvalue of $K_M^{-m}$ is weakly controlled, so that we have a uniform inequality of the form
\[
\log\left[\frac{\mu_1(a)}{\mu_{k}(a)}\right]
\leq \Lambda I(\psi_a) + C
\]
(for some $\Lambda>1$), then K\"ahler-Einstein metrics exist when the following stronger, $\Lambda$-dependent alpha-invariants criterion is satisfied:
\begin{eqnarray*}
 \frac{n}{n+1}&<&\alpha_{m,k}(M),\\
 (n[\Lambda-1]+k-1)\left[\frac{1}{\alpha_{m,1}(M)}-\frac{n+1}{n}\right]
 &<&(n-k+1)\left[\frac{n+1}{n}-\frac{1}{\alpha_{m,k}(M)}\right].
\end{eqnarray*}
\end{remark}

% *************** Bibliography ***************

\bibliographystyle{alpha}
{\small\bibliography{kaehler}{}}

\end{document}